\documentclass[UTF8,12pt]{article}
\usepackage[dvipsnames,svgnames,x11names]{xcolor}
\usepackage{upgreek}
\usepackage{listings}
\usepackage{amsmath,amssymb}
\usepackage{longtable}
\usepackage{enumerate}
\usepackage{amsthm}
\usepackage{multirow}
\usepackage{mathrsfs}
\usepackage{pgf,tikz}
\usepackage{pgfplots}
\usepackage{makecell}
\usepackage{graphicx}
\usepackage{subfigure}
\usepackage{caption}
\usepackage[ruled,vlined,linesnumbered]{algorithm2e}
\usepackage{bm}
\usepackage{bbm}
\usepackage{lscape}
\usetikzlibrary{arrows}
\usepackage{geometry}
\usepackage{booktabs}
\usepackage{url} 
\usepackage{ragged2e}
\usepackage{hyperref}
\usepackage{float}
\pagestyle{plain}
\bibliographystyle{siam}

\renewcommand\leq{\leqslant}
\renewcommand\geq{\geqslant}
\renewcommand\pi{\uppi}
\newcommand{\rmd}{\mathrm{d}}
\newcommand{\transpose}{^\top}
\newcommand{\trace}{\mathrm{Trace}}

\newtheorem{theorem}{Theorem}[section]
\newtheorem{prop}[theorem]{Proposition}
\newtheorem{rmk}[theorem]{Remark}
\newtheorem{defn}[theorem]{Definition}
\newtheorem{example}[theorem]{Example}

\DeclareMathOperator{\vol}{vol}

\usepackage{array}
\newcolumntype{L}[1]{>{\RaggedRight\arraybackslash}p{#1}} 
\newcolumntype{C}[1]{>{\Centering\arraybackslash}p{#1}} 
\newcolumntype{R}[1]{>{\RaggedLeft\arraybackslash}p{#1}} 

\newcolumntype{N}[1]{>{\RaggedLeft\arraybackslash $ }p{#1}<{$}} 
\newcolumntype{M}{>{$}r<{$}} 
\newcolumntype{P}{>{$}l<{$}} 


\makeatletter
\@addtoreset{equation}{section}
\makeatother

\newcommand{\adda}[1]{{\color{black}{#1}}}
\newcommand{\addb}[1]{{\color{black}{#1}}}

\usepackage{ulem}

\begin{document}
\title{An ODE approach to multiple choice polynomial programming}
\author{Sihong Shao\footnotemark[1],
\and Yishan Wu\footnotemark[2]}

\renewcommand{\thefootnote}{\fnsymbol{footnote}}
\footnotetext[1]{CAPT, LMAM and School of Mathematical Sciences, Peking University, Beijing 100871, China. Email: \texttt{sihong@math.pku.edu.cn}}
\footnotetext[2]{CAPT, LMAM and School of Mathematical Sciences, Peking University, Beijing 100871, China. Email: \texttt{wuyishan@pku.edu.cn}}
\date{\today}
\maketitle

\begin{abstract}

We propose an ODE approach to solving multiple choice polynomial programming (MCPP) after assuming that the optimum point can be approximated by the expected value of so-called thermal equilibrium as usually did in simulated annealing. The explicit form of the feasible region and the affine property of the objective function are both fully exploited in transforming the MCPP problem into the ODE system. We also show theoretically that a local optimum of the former can be obtained from an equilibrium point of the latter. Numerical experiments on two typical combinatorial problems, 
MAX-$k$-CUT and the calculation of star discrepancy, demonstrate the validity of the ODE approach, 
and the resulting approximate solutions are of comparable quality to those obtained by the state-of-the-art heuristic algorithms but with much less cost. 
When compared with the numerical results obtained by using Gurobi to solve MCPP directly,
our ODE approach is able to produce approximate solutions of better quality in most instances. 
This paper also serves as the first attempt to use a continuous algorithm for approximating the star discrepancy. 


\vspace*{4mm}
\noindent {\bf AMS subject classifications:}
90C59 
35Q90,  
90C09, 
90C23, 


\noindent {\bf Keywords:}
Pseudo-Boolean optimization;
Multiple choice constraint;
Continuous approach;
MAX-CUT; 
Star discrepancy
\end{abstract}

\section{Introduction}
\label{sec:intro}

We consider the following pseudo-Boolean optimization problem 
\begin{equation}\label{eq:mcpp0}
\begin{aligned}
\min_{x\in \{0,1\}^n} &&& f(x),\\
\text{s.t.} &&& \sum_{i\in I_j} x_i=1,\ j=1,2,\dots m,
\end{aligned}
\end{equation}
where $f$ is a polynomial function, $x$ is {an} $n$-dimensional Boolean vector, the indices $[n]: = \{1,2,\dots,n\}$ are divided into $m$ disjoint subsets $I_1,I_2,\dots,I_m$,
and the cardinality of each $I_j$, denoted by {$d_j := |I_j|$}, must be greater than $1$.  Then $x$ is accordingly divided into $m$ vectors $x^{(1)}, x^{(2)},\dots,x^{(m)}$
where each $x^{(j)}$ picks out all the entries in $I_j$ of $x$. The constraint{s} mean that, for each $j$,  there exists exact one element that equals to $1$ in the subvector $x^{(j)}$, implying that only one is determined from $d_j$ choices and thus $m$ items chosen from $n$ choices in total. A group of entries of $x$ in a single $I_j$ represents a decision out of finite choices. More precisely, we use multiple choice polynomial programming (MCPP) to call the problem \eqref{eq:mcpp0}, which is capable of dealing with various problems in diverse disciplines \cite{boros_pseudo-boolean_2002}, for example, the MAX-$k$-CUT \cite{ma_multiple_2017}, star discrepancy \cite{gnewuch_new_2011} problems and SAT \cite{kirkpatrick_optimization_1983}. It should be pointed out that studies on integer linear programming problems with multiple choice constraints,  termed  ``multiple choice programming" (MCP), can date back to \cite{healy_multiple_1964}. Since then several methods have been developed {(see, e.g., \cite{singh_multi-choice_2017} for a detailed review)}, but most of them are not designed for nonlinear objective functions. Although MCPP can be transformed into MCP by defining new variables to represent monomials, an exploration in this direction will not be presented here. 

As a typical $0$-$1$ programming problem, {MCPP} can also be treated with standard {mixed} integer {nonlinear} programming solvers, such as Gurobi~\cite{gurobi}, IBM-CPLEX~\cite{cplex}, and SCIP~\cite{BestuzhevaBesanconChenetal.2021},
but few of them are designed for general nonlinear programming. For example, to apply Gurobi, one has to reformulate the nonlinear terms in MCPP instances into linear and/or quadratic forms by defining new variables and new constraints, thereby greatly increasing the problem size. This defect becomes even severer in calculating the star discrepancy since the degree of corresponding objective function is nothing but the dimension of underlying space and is usually much greater than $1$ (see Eq.~\eqref{eq:MCPP_disc} in Section~\ref{sec:discrepancy}). 

Alternatively, developing continuous approaches {for} discrete problems has gotten more attention since, e.g., the Hopfield network \cite{hopfield_neurons_1984}, an early ODE approach, was proposed for the $0$-$1$ quadratic programming in 1980s. The Hopfield network has been extended to other combinatorial optimization problems, see, e.g. \cite{jagota_approximating_1995, wang_improved_2006}, where many useful mathematical techniques in dynamical system have been introduced. In mostly recent work \cite{niu_discrete_2022}, an ODE approach was 
proposed through a quartic penalty approximation of the Boolean polynomial program. 
In line with this, here we propose to use the solutions of the following ODE system 
\begin{equation}
\label{eq:main_ODE}
\left\{
\begin{array}{ll}
\displaystyle 
\frac{\rmd{y^{(j)}_i}}{\rmd t}=-y^{(j)}_i+\sigma_i(-\Phi^{(j)}(y);1/T),\quad j\in [m],\ i\in[d_j],\\
\displaystyle y(0)=y_0 \in [0,1]^n,
\end{array}
\right.
\end{equation}
to approximate the solutions of MCPP, where the time-dependent vector $y(t): [0,+\infty)\rightarrow \mathbb{R}^n$ is divided as $x$ in Eq.~\eqref{eq:mcpp0} does, the initial data $y_0$ is required to satisfy the ``continuous" multiple choice constraint: $\sum_{i=1}^{d_j} (y_0)_i^{(j)} = 1$ for all $j\in[m]$, $\Phi^{(j)}$ denotes the partial derivative of $f$ with respect to $x^{(j)}$: $\Phi^{(j)}_i = {\partial f}/{\partial x_i^{(j)}}$, $T$ is a positive parameter called ``temperature", {and} $\sigma_i(z; \beta): \mathbb{R}^d\times\mathbb{R}_+ \rightarrow (0,1)$ gives the softmax function defined by 
\begin{equation}\label{eq:soft}
\sigma_i(z;\beta)=\frac{\exp\left(\beta z_i\right)}{\sum_{k=1}^d\exp\left(\beta z_{k}\right)},
\end{equation}
with $d$ being the dimension of input argument $z$. Note in passing that $y^{(j)}_i(t)\in(0,1)$ for arbitrary $t>0$, i.e., $y(t): [0,+\infty)\rightarrow (0,1)^n$. We are able to prove that the equilibrium points of the ODE system \eqref{eq:main_ODE} represent the local optimum 
solutions of MCPP \eqref{eq:mcpp0}. Therefore, we just have to numerically integrate \eqref{eq:main_ODE} until we find an equilibrium (usually fast with fewer iterations) which can be rounded into a local solution of MCPP.  Along the way, various well-established techniques in numerically solving ODEs can be incorporated to improve the efficiency without sacrificing the solution quality.
Actually, the ODE system \eqref{eq:main_ODE} can be regarded as a continuous version of simulated annealing (SA) \cite{kirkpatrick_optimization_1983} by starting from a continuous-time Markov chain and exploiting two intrinsic properties of MCPP \eqref{eq:mcpp0}.
One is that the explicit form of the feasible region is so straightforward that the state space of the Markov chain can be easily determined. The other is that the objective function of MCPP, $f(x^{(1)},x^{(2)},\dots,x^{(m)})$,  is affine with respect to $x^{(j)}$ for all $j \in [m]$ (otherwise we can modify it equivalently),
with which the definition of transition rate can be significantly simplified.
It should be noted that Eq.~\eqref{eq:main_ODE} may recover the ODE system used in \cite{peterson_neural_1988} for the unconstrained binary quadratic programming (see Example~\ref{ex1}). 

We apply the proposed ODE approach into two typical NP-hard problems:
MAX-$k$-CUT with $k=2,3,4,5$ and the star discrepancy calculation,
and two state-of-the-art heuristic algorithms: the multiple operator heuristic (MOH) \cite{ma_multiple_2017} and the improved threshold accepting (TA\_improved) \cite{gnewuch_new_2011} methods are employed to as the reference \adda{of the solution quality}, respectively.
For MAX-$k$-CUT problems, the test bed is G-Set\footnote{available from \url{https://web.stanford.edu/~yyye/yyye/Gset/}}. More than half of the ratios between the best cut values achieved by our ODE approach and those by MOH are above 0.99. For star discrepancy calculation, the test set is a group of good lattice point (GLP) \addb{sets} adopted in \cite{winker_application_1997}. 
The ratios between the best lower bounds achieved by our ODE approach and those by TA\_improved are at least $0.91$.
More importantly, \adda{in both problems} the ODE approach requires much fewer iterations \adda{compared to MOH or TA\_improved} for each trial while the cost of each iteration is linear to the problem size \adda{across these three methods}. Its iteration steps over MOH's for MAX-$k$-CUT are about $1/10^{5}$ to $1/10^{4}$, and about {$1/10^{2}$ to $1/10$} over TA\_improved's for calculating the star discrepancy.


To further demonstrate the capability of the proposed ODE approach, 
the numerical results obtained by using Gurobi to directly solve the MCPP instances 
are adopted as comparison for which the time limit of Gurobi is set to be the very runtime of the ODE approach. A detailed numerical comparison shows that, (1)
for MAX-$k$-CUT with $k=3,4,5$, in more than $80\%$ instances, the solutions produced by the ODE approach 
are better than or as good as Gurobi's; 
(2) for the star discrepancy problem, in about $17\%$ instances, Gurobi does not even provide a feasible solution under the time limitation, and in the remaining instances, it provides no better solutions than our ODE approach.

The paper is organized as follows. In Section~\ref{sec:alg} we show how we transform the MCPP problem~\eqref{eq:mcpp0} into the ODE system~\eqref{eq:main_ODE} step by step starting from the well-known assumption~\eqref{eq:B} of simulated annealing. In Section~\ref{sec:thy} we prove that a local optimum of MCPP~\eqref{eq:mcpp0} can be indeed obtained via an equilibrium point of the ODE system~\eqref{eq:main_ODE}. After that, Section~\ref{sec:detail} details the procedure of finding the equilibrium points by numerically integrating Eq.~\eqref{eq:main_ODE}. In Sections~\ref{sec:MAXkCUT} and \ref{sec:discrepancy},  we apply the proposed ODE approach into the MAX-$k$-CUT problem and approximating the star discrepancy, respectively. The paper is concluded in Section~\ref{sec:conclu} with a few remarks.

\section{The ODE approach: A continuous version of simulated annealing}
\label{sec:alg}

Under the multiple choice constraint in Eq.~\eqref{eq:mcpp0}, $x^{(j)}$ must be a standard unit vector in $\mathbb{R}^{d_j}$. Let $B_{d}=\{e_{i}^d\}_{i=1}^d\subset \mathbb{R}^{d}$ collect the standard base of $\mathbb{R}^d$ where $e^d_i$ denotes the $d$-dimensional unit vector the $i$-th entry of which equals $1$ and the rests are $0$s. The multiple choice constraint is then equivalent to $x^{(j)}\in B_{d_j},\ j=1,2,\dots,m$. Therefore, the feasible region for MCPP is
\begin{equation}\label{eq:X}
\mathcal{X}:=B_{d_1}\times B_{d_2}\times\dots\times B_{d_m}.
\end{equation}
We should mention that it is $d_j$, the cardinality of each $I_j$,  that determines the structure of the feasible region, rather than the particular devision,
and a compact notation, $B_d^m := \mathcal{X}$, will be used in Sections~\ref{sec:MAXkCUT} and \ref{sec:discrepancy} when $d_1=\dots=d_m=d$. 
Then MCPP~\eqref{eq:mcpp0} becomes $\min_{x\in \mathcal{X}} f(x)$ starting from which we explain how regard the ODE system \eqref{eq:main_ODE} 
as a continuous version of SA. The well-known SA method \cite{kirkpatrick_optimization_1983} approaches ``thermal equilibrium" by generating a Markov chain such that its stationary distribution is the Boltzmann distribution, namely
\begin{equation}\label{eq:B}
\Pr(X=x)\propto \exp(-E(x)/T),
\end{equation}
where $X\in \mathcal{X}$ is a random variable, and $E(x)$ denotes the energy at state $x$. 
We set $E(x):=f(x)$ for MCPP,
adopt a similar approach as SA but in a continuous, ODE-type way, by defining a continuous-time Markov chain, and then calculate the dynamics of expected value through the forward equation. 

To define a random variable of a continuous-time Markov chain, we need to determine two things: the state space of the variable
and the transition rate between states. For MCPP, the former is nothing but the feasible region \eqref{eq:X}. 
Let $X_t$ with $t\geq 0$ be the random variable. The state change of $X_t$ is allowed to happen only in a single $B_j$. That is, $X_t$ changes between $x=(x^{(1)}, x^{(2)}, \dots, x^{(m)})$ and $x'$, where there exists $j\in[m]$ and $i'\in [d_j]$ such that $x'=(x^{(\bar{j})},e_{i'}^{d_j})$. 
Here $x^{(\bar{j})}: = (x^{(1)}, \dots, x^{(j-1)}, x^{(j+1)}, \dots,  x^{(m)})$ denotes the $(n-d_j)$-dimensional vector by deleting $x^{(j)}$,
and $(x^{(\bar{j})},\xi): = (x^{(1)}, \dots, x^{(j-1)}, \xi, x^{(j+1)}, \dots,  x^{(m)})$ the $n$-dimensional vector by replacing 
$x^{(j)}$ with $\xi\in \mathbb{R}^{d_j}$. We are ready to determine the transition rate. 
Before that, we would like to {assume} that  $f(x^{(1)},x^{(2)},\dots,x^{(m)})$ is affine with respect to $x^{(j)}$ after fixing $x^{(\bar{j})}$ for all $j \in [m]$:
\begin{equation}\label{eq:affine}
f(x)=x^{(j)}\cdot \Phi^{(j)}(x^{(\bar{j})})+f((x^{(\bar j)},0)),
\end{equation}
where the dot $\cdot$ gives the standard inner product,
and the second RHS term is independent of $x^{(j)}$.
Eq.~\eqref{eq:affine} can be easily verified if noting: (1)  
every $(x^{(j)}_i)^k$ in $f$ can be replaced by $x^{(j)}_i$ because the power of $0$ or $1$ is equal to itself; 
(2) every monomial with divisor $x^{(j)}_i x^{(j)}_{i'}$, $i\neq i'$, vanishes; (3) the partial derivative $\Phi^{(j)}(x)$ is independent of $x^{(j)}$
and thus $\Phi^{(j)}(x^{(\bar{j})})=\Phi^{(j)}(x)$ is defined unambiguously. For $x=(x^{(\bar{j})},e_{i}^{d_j})$,
 $x'=(x^{(\bar{j})},e_{i'}^{d_j})$, $i\neq i'$, according to Eq.~\eqref{eq:affine}, we have 
 \begin{equation*}
 f(x) =f((x^{(\bar j)},0))+\Phi^{(j)}_i(x^{(\bar j)}), \quad
 f(x') = f((x^{(\bar j)},0))+\Phi^{(j)}_{i'}(x^{(\bar j)}), 
 \end{equation*}
 and 
\begin{equation}\label{eq:sum}
-f(x')+f(x)=-\Phi^{(j)}_{i'}(x^{(\bar j)})+\Phi^{(j)}_{i}(x^{(\bar j)}).
\end{equation}

Let $q\left(x\rightarrow x'\right)$ denote the transition rate from state $x$ to $x'$. It also means $(x,x')$-entry of the $Q$-matrix over $\mathbb{R}^{\mathcal{X}\times\mathcal{X}}$. The detailed balance condition reads
\[
\exp(-f(x)/T)q\left(x\rightarrow x'\right)=\exp(-f(x')/T)q\left(x'\rightarrow x\right),
\]
or
\begin{equation}
\label{eq:dtl_blc}
\frac{q\left(x\rightarrow x'\right)}{q\left(x'\rightarrow x\right)}=
\exp((-f(x')+f(x))/T)
\end{equation}
for non-zero entries of $q$ which implies $x\neq x'$. 
{To satisfy Eq.~\eqref{eq:dtl_blc}, combining it with Eq.~\eqref{eq:sum}}, we define 
\begin{equation}\label{eq:q}
q\left(x\rightarrow x'\right)=
\begin{cases} \sigma_{i'}\left(-\Phi^{(j)}(x^{(\bar{j})});1/T\right), &x=(x^{(\bar{j})},e_{i}^{d_j}), x'=(x^{(\bar{j})},e_{i'}^{d_j}), i\neq i',\\
0,& \text{otherwise.}
\end{cases}
\end{equation}
Using $x'^{(\bar j)}=x^{(\bar j)}$, we have 
\[
q\left(x'\rightarrow x\right)=\sigma_{i}\left(-\Phi^{(j)}(x'^{(\bar{j})});1/T\right)=\sigma_{i}\left(-\Phi^{(j)}(x^{(\bar{j})});1/T\right),
\]
and then Eq.~\eqref{eq:dtl_blc} can be readily verified.




Next, {let us} calculate the dynamics of expected value. We denote the probability distribution at time $t$ by $p(x;t):=\Pr(X_t=x)$. Then it satisfies the forward equation
\begin{equation}\label{eq:forward}
\frac{\rmd p(x;t)}{\rmd t} = \sum_{x'\in \mathcal{X}, x'\neq x} q\left(x'\rightarrow x\right)p\left(x';t\right)-\sum_{x'\in \mathcal{X}, x'\neq x} q\left(x\rightarrow x'\right)p\left(x;t\right).
\end{equation}
Let $\mu:\mathbb{R}^n\rightarrow \mathbb{R}$ be an arbitrary vector function. 
From Eq.~\eqref{eq:forward}, we have
\begin{equation}
\label{eq:Mean}
\begin{aligned}
\frac{\rmd \left<\mu(X_t)\right>}{\rmd t} 
	&= \sum_{x\in \mathcal{X}} \mu(x)\frac{\rmd p(x;t)}{\rmd t}\\
	&= \sum_{x'\in\mathcal{X}} p(x';t)\sum_{x\neq x'}\mu(x)q(x'\rightarrow x)-\sum_{x\in\mathcal{X}}p(x;t)\sum_{x'\neq x}\mu(x)q(x\rightarrow x')\\
	&= \left<\sum_{x\neq X_t}(\mu(x)-\mu(X_t))q(X_t\rightarrow x)\right>,
\end{aligned}
\end{equation}
where the term in the summation only counts when both $\mu(x)\neq \mu(X_t)$ and $x\neq X_t$ hold.  
Let $m_t=\left<X_t\right>\in[0,1]^n$ be the expected value of $X_t$. Then $(m_t)^{(j)}_{i}=\sum_{x\in \mathcal{X}} x^{(j)}_i p(x;t)$ which assigns $\mu$ to $(\cdot)^{(j)}_i:x\mapsto x^{(j)}_i$
where there are only few $x$s satisfies these two conditions. Recalling the definition of $q$ in Eq.~\eqref{eq:q}, $x\neq x'$ iif $x$ and $x'$ differs in only one group $I_{k}$. In addition, we need $x^{(j)}_i\neq (x')^{(j)}_i$ then $k$ must be $j$. Therefore, $x=((X_t)^{(\bar{j})},e^{d_j}_{i'})$ for some $e^{d_j}_{i'}\neq (X_t)^{(j)}$. Accordingly, from Eqs.~\eqref{eq:Mean} and \eqref{eq:q}, we obtain
\begin{align}
\frac{\rmd (m_t)^{(j)}_i}{\rmd t} &= \left<\sum_{1\leq i'\leq d_j, e^{d_j}_{i'}\neq (X_t)^{(j)}}(x^{(j)}_i-(X_t)^{(j)}_i)q(X_t\rightarrow x)\right>\nonumber\\
	&= \left<\sum_{1\leq i'\leq d_j, e^{d_j}_{i'}\neq (X_t)^{(j)}}(x^{(j)}_i-(X_t)^{(j)}_i)\sigma_{i'}(-\Phi^{(j)}(x);1/T)\right>\nonumber\\
	&= \left<\sum_{1\leq i'\leq d_j}(x^{(j)}_i-(X_t)^{(j)}_i)\sigma_{i'}(-\Phi^{(j)}({x^{(\bar j)}});1/T)\right>\nonumber\\
	&= \left<\sum_{1\leq i'\leq d_j}x^{(j)}_i\sigma_{i'}(-\Phi^{(j)}({(X_t)^{(\bar{j})}});1/T)\right>-\left<\sum_{1\leq i'\leq d_j}(X_t)^{(j)}_i\sigma_{i'}(-\Phi^{(j)}({x^{(\bar j)}});1/T)\right>\nonumber\\
	&= \left<\sigma_{i}(-\Phi^{(j)}(X_t);1/T)\right>-\left<(X_t)^{(j)}_i\right>  \label{eq:mfa} \\
	&\approx  \sigma_{i}(-\Phi^{(j)}(m_t);1/T)-(m_t)^{(j)}_i, \label{eq:mfa1}
\end{align}
where we have applied $x^{(\bar{j})} = (X_t)^{(\bar{j})}$, and $x^{(j)}_i\neq 0$ iif $i'=i$ in the first RHS term of Eq.~\eqref{eq:mfa},  $\sum_{1\leq i'\leq d_j}\sigma_{i'}(-\Phi^{(j)}(x);1/T)=1$ in the second RHS term of Eq.~\eqref{eq:mfa},
as well as a rough approximation $\left<\sigma_{i}(-\Phi^{(j)}(X_t);1/T)\right>\approx\sigma_{i}(-\Phi^{(j)}(m_t);1/T)$ in the first RHS term of Eq.~\eqref{eq:mfa1}.
The ODE system~\eqref{eq:main_ODE} is manifest in Eq.~\eqref{eq:mfa1} after replacing $m_t$ with $y(t)\in(0,1)^n$. That is, {$y(t)$} is also an approximation of the dynamics of the expected value of a continuous-time Markov chain,
and by searching for its efficient numerical approximations, we are able to approximate the Markov chain efficiently.  Moreover, its equilibrium serves as the stable distribution of the Markov chain, namely, the Boltzmann distribution \eqref{eq:B}. When the temperature goes to zero, the distribution approaches the uniform distribution on the ground states, which may give the optimum solution. Therefore, the equilibrium gives a reasonable approximation of the optimum, the proof of which is left for Section \ref{sec:thy}.


Finally, $X_t\in \mathcal{X}$ means that the multiple choice constraint $\sum_{i=1}^{d_j} (X_t)^{(j)}_i=1$ for all $j\in [m]$ holds with probability $1$ and so does $m_t$. In fact, we claim that $y(t)$ defined in Eq.~\eqref{eq:main_ODE} also satisfy such constraint when the initial data $y(0)$ does, which can be readily seen from the sum of Eq.~\eqref{eq:main_ODE}: 
\begin{equation}
\frac{\rmd}{\rmd t} S_j(t)=-S_j(t)+1 \quad \Leftrightarrow \quad S_j(t) = (S_j(0)-1) \mathbbm{e}^{-t} + 1,
\end{equation}
where $S_j(t) =\sum_{i=1}^{d_j} y^{(j)}_i(t)$.


\begin{example}\rm
\label{ex1}
The ODE approach is applicable for the unconstrained situation where the multiple choice constraint in Eq.~\eqref{eq:mcpp0} is absent. By introducing extra $n$ Boolean variables $x_{n+1},\dots,x_{2n}$, the unconstrained problem $\min_{x\in\{0,1\}^n} f(x)$ can be reformulated into the following MCPP
\begin{equation}\label{eq:mcpp1}
\begin{aligned}
\min_{x\in\{0,1\}^{2n}} &&& f(x_1,x_2,\dots,x_n),\\
\text{s.t.} &&& x_i+x_{i+n}=1,\ i=1,2,\dots,n,
\end{aligned}
\end{equation}
where the index set $[2n]$ are divided into $n$ disjoint subsets: $I_j=\{j,j+n\}$, $d_j=2$, $j=1, 2, \dots, n$,
and $\Phi^{(j)}_1(x)=\frac{\partial f(x)}{\partial x_j}$, $\Phi^{(j)}_2(x)=0$. Then from Eq.~\eqref{eq:main_ODE}, 
we have that $y_j$ (i.e., $y^{(j)}_1$) satisfies 
\begin{equation}
\label{eq:ODE_uncons}
\frac{\rmd y_j}{\rmd t}=-y_j+\frac{1}{2}\left(\tanh\left(-\frac{1}{2T}\frac{\partial f(y)}{\partial x_j}\right)+1\right), \quad j=1, 2, \dots, n. 
\end{equation}
As $y_1^{(j)}(t)+y_{2}^{(j)}(t)=y_j(t)+y_{j+n}(t)=1$ for $t\geq 0$, we do not have to care about the dynamics of extra variable $y_{j+n}(t)$. Implementing a linear transformation $z_j = 2y_j-1$ in Eq.~\eqref{eq:ODE_uncons} yields a ODE sysytem for $z\in[-1,1]^n$: 
\[
\frac{\rmd z_j}{\rmd t}=-z_j+\tanh\left(-\frac{1}{2T}\frac{\partial f(\frac{1}{2}(z+1))}{\partial x_j}\right), \quad j=1, 2, \dots, n,
\]
which was also mentioned in \cite{peterson_neural_1988} for the unconstrained binary quadratic programming.
\end{example}

\section{Numerical analysis}
\label{sec:thy}

Let $\bar{y}(T)$ be an equilibrium point of the ODE system~\eqref{eq:main_ODE}  under the parameter $T$, namely,
\begin{equation}\label{eq:ybar}
\bar y(T)^{(j)}=\sigma\left(-\Phi^{(j)}(\bar{y}(T));1/T\right), \ \forall\, j\in [m].
\end{equation}
We claim that $\bar{y}(T)$ approximates a local solution of Eq.~\eqref{eq:mcpp0} when $T$ is sufficiently small. This can be formally explained as follows.
As $\beta$ approaches $+\infty$, the limit of softmax function~\eqref{eq:soft}, denoted by  $\hat\sigma(z) =(\hat\sigma_i(z))$, is a kind of ``hard"max: 
\begin{equation}\label{eq:sigmahat}
\hat\sigma_i(z) = \lim_{\beta\rightarrow +\infty} \sigma_i(z;\beta)=
\begin{cases}
1/r, & z_i=\max \{z_1,z_2,\dots,z_d\},\\
0, & \text{otherwise},
\end{cases}
\end{equation}
where $r=|\{i|z_i=\max\{z_1,z_2,\dots,z_d\}\}|$ denotes the number of maximal entries of $z$. 
It can be easily observed that the range of $\hat{\sigma}(z)$, denoted by $\bar{B}_d$,  is a discrete set, i.e., 
\begin{equation}
\bar{B}_d=\left\{\frac{1}{|A|}\chi^A\in \mathbb{R}^d \middle| \chi^A_i=\left\{\begin{array} {l}
1,\,i\in A\\
0,\,i\notin A,
\end{array}\right. A\subseteq [d], \;\;A\neq \varnothing \right\}.  
\end{equation}
After assuming the limit {$\hat{y} := \lim\limits_{T\rightarrow 0^+} \bar{y}(T)$} exists, 
it can be formally deduced from Eq.~\eqref{eq:ybar} that 
\begin{equation}\label{eq:equ_inf}
\hat y^{(j)}=\hat\sigma\left(-\Phi^{(j)}(\hat{y})\right),\ \forall\, j\in [m]. 
\end{equation}
That is, $\hat y^{(j)}$ belongs to $\bar{B}_{d_j}$, and thus 
\begin{equation}
\hat{y} \in \bar{\mathcal{X}}:=\bar{B}_{d_1}\times\bar{B}_{d_2}\times\dots \bar{B}_{d_m}.
\end{equation}
Combining Eqs.~\eqref{eq:sigmahat} and \eqref{eq:equ_inf}, we know that the nonzero entries in $\hat y^{(j)}$   correspond to the minimal entries of $\Phi^{(j)}(\hat{y})$, and
thus $\hat y^{(j)}$ minimizes the MCPP objective function $f((\hat{y}^{(\bar{j})},\xi))$ for  $\xi\in \mathbb{R}^{d_j}$ after fixing $\hat{y}^{(\bar{j})}$ in view of its affine property given in Eq.~\eqref{eq:affine}.
Actually, given Definition~\ref{def:local}, we are able to prove that $\hat{y}$ is indeed a local optimum of MCPP~\eqref{eq:mcpp0} in $\bar{\mathcal{X}}$ (see Proposition~\ref{prop:loc_opt}).

\begin{defn}[local optimality]
\label{def:local}
We call $x\in\bar{\mathcal{X}}$ a local optimum of $f$ in $\bar{\mathcal{X}}$,
if $\forall\, x'\in\bar{\mathcal{X}}$ satisfying $\exists\, j_0\in [m]$ such that $(x')^{(\bar{j_0})} = x^{(\bar{j_0})}$,
we have $f(x')\geq f(x)$. 
\end{defn}


\begin{prop}
\label{prop:loc_opt}
The $n$-dimensional vector  $\hat{y}$ defined in Eq.~\eqref{eq:equ_inf} is a local optimum of MCPP~\eqref{eq:mcpp0} in $\bar{\mathcal{X}}$.
\end{prop}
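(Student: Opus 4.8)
The plan is to verify Definition~\ref{def:local} directly, using the affine structure~\eqref{eq:affine} to reduce the inequality $f(x')\geq f(\hat{y})$ to a single-block statement and then the hardmax characterization~\eqref{eq:equ_inf} to certify that this block is optimal. First I would fix an arbitrary competitor $x'\in\bar{\mathcal{X}}$ for which some $j_0\in[m]$ satisfies $(x')^{(\bar{j_0})}=\hat{y}^{(\bar{j_0})}$; thus $x'$ may differ from $\hat{y}$ only inside the $j_0$-th block, and the goal becomes $f(x')\geq f(\hat{y})$.

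Next I would expand both $f(x')$ and $f(\hat{y})$ via the affine decomposition~\eqref{eq:affine} at index $j_0$. Since $\Phi^{(j_0)}$ is independent of $x^{(j_0)}$ and the offset $f((x^{(\bar{j_0})},0))$ depends only on the remaining blocks, the equality $(x')^{(\bar{j_0})}=\hat{y}^{(\bar{j_0})}$ forces the gradient $\Phi^{(j_0)}(\hat{y})$ and the offset to be shared by both points. Subtracting the two expansions then collapses the difference to one inner product,
\[
f(x')-f(\hat{y})=\bigl((x')^{(j_0)}-\hat{y}^{(j_0)}\bigr)\cdot\Phi^{(j_0)}(\hat{y}),
\]
so it suffices to show that $\hat{y}^{(j_0)}$ minimizes $\xi\cdot\Phi^{(j_0)}(\hat{y})$ over $\xi\in\bar{B}_{d_{j_0}}$.

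This last point is the crux. Writing $m^\ast=\min_{i}\Phi^{(j_0)}_i(\hat{y})$ and $A=\{i:\Phi^{(j_0)}_i(\hat{y})=m^\ast\}$, equations~\eqref{eq:sigmahat} and~\eqref{eq:equ_inf} identify the support of $\hat{y}^{(j_0)}$ with $A$ carrying uniform weights $1/|A|$, whence $\hat{y}^{(j_0)}\cdot\Phi^{(j_0)}(\hat{y})=m^\ast$. Every competitor $\xi\in\bar{B}_{d_{j_0}}$ has the form $\tfrac{1}{|A'|}\chi^{A'}$ for some nonempty $A'$, so $\xi\cdot\Phi^{(j_0)}(\hat{y})$ is an average of entries $\Phi^{(j_0)}_i(\hat{y})\geq m^\ast$ and hence at least $m^\ast$. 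Substituting into the displayed identity yields $f(x')-f(\hat{y})\geq m^\ast-m^\ast=0$, completing the verification.

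I do not expect a genuine obstacle here; the argument is essentially bookkeeping around~\eqref{eq:affine} and~\eqref{eq:equ_inf}. The one step needing care is the claimed independence of both $\Phi^{(j_0)}$ and the offset term from the block $x^{(j_0)}$, which is exactly what lets the two affine expansions share identical coefficients so that only the $x^{(j_0)}$-part survives; the structural facts (1)--(3) listed after~\eqref{eq:affine} supply it. A secondary, purely cosmetic check is that the averaging inequality becomes an equality precisely when $A'\subseteq A$, consistent with $\hat{y}^{(j_0)}$ being a (generally non-unique) minimizer.
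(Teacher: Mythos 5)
Your proposal is correct and follows essentially the same route as the paper's proof: both expand $f$ via the affine decomposition~\eqref{eq:affine} at the block $j_0$, use the hardmax characterization~\eqref{eq:equ_inf} to identify $\hat{y}^{(j_0)}\cdot\Phi^{(j_0)}(\hat{y}^{(\bar{j_0})})$ with $\min_k\Phi^{(j_0)}_k(\hat{y}^{(\bar{j_0})})$, and conclude from the fact that any $\xi\in\bar{B}_{d_{j_0}}$ yields an average of entries bounded below by that minimum. Your write-up only spells out the averaging step slightly more explicitly than the paper does.
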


\begin{proof}
Let $y'\in\bar{\mathcal{X}}$ satisfy $(y')^{(\bar{j_0})} = \hat{y}^{(\bar{j_0})}$. From Eqs.~\eqref{eq:sigmahat} and \eqref{eq:equ_inf}, we have 
\begin{equation}
\label{eq:loco_cond0}
\hat{y}^{(j_0)}_i=\frac{1}{r}, \;\; \Phi_i^{(j_0)} (\hat{y}^{\bar{(j_0)}})=\min_{1\leq k\leq d_{j_0}}\Phi_{k}^{(j_0)} (\hat{y}^{(\bar{j_0})}),\;\; \forall\ i\ \text{such that }\hat{y}^{(j_0)}_i\neq 0,
\end{equation}
where $r$ equals the number of nonzero entries of $\hat{y}^{(j_0)}$. 
Combining Eqs.~\eqref{eq:affine} and \eqref{eq:loco_cond0} yields 
\begin{equation*}
\begin{aligned}
f(y')-f(\hat{y})&=(y')^{(j_0)}\cdot\Phi^{(j_0)}((y')^{(\bar{j_0})})-\hat{y}^{(j_0)}\cdot\Phi^{(j_0)}(\hat{y}^{(\bar j_0)})\\
&=(y')^{(j_0)}\cdot\Phi^{(j_0)}(\hat{y}^{(\bar{j_0})})-\min_{1\leq k\leq d_{j_0}}\Phi_{k}^{(j_0)} (\hat{y}^{(\bar{j_0})}) \geq 0,
\end{aligned}
\end{equation*}
where the fact $(y')^{({j_0})}\in \bar{B}_{d_{j_0}}$ is used in the last inequality. 
\end{proof}

It should be noted that Eq.~\eqref{eq:equ_inf} does not hold in general since $\sigma(\cdot;\beta)$ does not converge to $\hat{\sigma}$ uniformly. However, in numerical experiments, the equilibriums $\bar{y}(T)$, which may not be close to a Boolean vector, are always very close to vectors in $\bar{\mathcal{X}}$ for sufficiently small $T$. Therefore, we can reasonably assume that $\bar{y}$ is an approximation of $\hat{y}$ and that $\sigma(\cdot;1/T)$ is an approximation of $\hat{\sigma}$. Then Eq.~\eqref{eq:ybar} approximates Eq.~\eqref{eq:equ_inf}. In fact, the theoretical results exists and in Proposition~\ref{prop:local} we give the analytical condition for the closeness between $\bar{y}$ and $\bar{\mathcal{X}}$ under which we can claim that Eq.~\eqref{eq:equ_inf} holds. Thus, by Proposition~\ref{prop:loc_opt}, $\hat{y}$ is a local optimum. Before that, we give some notations and definitions. 

Let $L$ be the maximal Lipschitz constant of all $\Phi_i^{(j)}$ with $j \in [m]$, $i\in [d_j]$. Namely, we have
\begin{equation}
\left|\Phi^{(j)}_i(x)-\Phi_i^{(j)}(x')\right|\leq L\|x-x'\|_\infty, \quad \forall \, j\in[m], \forall\, i\in [d_j], \forall\, x,x'\in[0,1]^n.
\end{equation}
Moreover, we define the ``minimal gap" of $\Phi$ as
\begin{equation}\label{mingap}
\mathsf{g} :=\min  \left\{\left|\Phi_{i'}^{(j)}(x)-\Phi_i^{(j)}(x)\right| \middle| \Phi_{i'}^{(j)}(x)\neq \Phi_i^{(j)}(x), x\in\bar{\mathcal{X}}, j\in[m], i', i\in [d_j]\right\}
\end{equation}
to measure how close can two different $\Phi^{(j)}_i$s in $\bar{\mathcal{X}}$ get,
and $\hat{d}:=\max\{d_1,d_2,\dots, d_m\}$.

\begin{prop}\label{prop:local}
Let $\bar{y}=\bar{y}(T)$ be an equilibrium of the ODE system \eqref{eq:main_ODE}, and assume that there exists $\hat{y}\in\bar{\mathcal{X}}$, whose distance to $\bar{y}$, denoted by $\varepsilon:=\|\hat{y}-\bar{y}\|_\infty>0$, satisfies
\begin{equation}\label{eq:equ_cond}
\hat{d}\varepsilon<\frac{1}{2},\;\;\; \frac{\varepsilon}{\ln(1/(\hat{d}\varepsilon)-1)}<\frac{T}{2L}, \;\;\; T\ln\frac{1+\hat{d}\varepsilon}{1-\hat{d}\varepsilon}+2L\varepsilon<\mathsf{g}.
\end{equation}
Then we have Eq.~\eqref{eq:equ_inf} holds and thus $\hat{y}$ must be a local optimum of $f$ in $\bar{\mathcal{X}}$.
\end{prop}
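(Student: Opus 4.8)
The plan is to reduce everything to Proposition~\ref{prop:loc_opt}: once I establish that $\hat{y}$ satisfies the hardmax fixed-point relation \eqref{eq:equ_inf}, local optimality follows at once. So the entire task is to deduce \eqref{eq:equ_inf} from the equilibrium identity \eqref{eq:ybar} together with the three smallness conditions \eqref{eq:equ_cond}. I would argue block by block: fix $j\in[m]$, write $\hat{y}^{(j)}=\tfrac1r\chi^{A}$ for its support set $A\subseteq[d_j]$ with $r=|A|\le\hat d$, and show that $A$ is exactly the set of minimizers of $\Phi^{(j)}(\hat{y})$ with all those minimal values equal. This is precisely what the definition \eqref{eq:sigmahat} requires for $\hat\sigma(-\Phi^{(j)}(\hat{y}))=\hat{y}^{(j)}$.

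The engine of the argument is to take logarithmic ratios of the equilibrium identity \eqref{eq:ybar}. Since the softmax output is strictly positive and sums to one on each block, I can form, for any $i,i'\in[d_j]$,
\[
\Phi^{(j)}_{i'}(\bar{y})-\Phi^{(j)}_{i}(\bar{y})=T\ln\frac{\bar{y}^{(j)}_i}{\bar{y}^{(j)}_{i'}},
\]
and transport this to $\hat{y}$ via the global Lipschitz bound: applying it to $\Phi^{(j)}_i$ and to $\Phi^{(j)}_{i'}$ separately (each within $L\varepsilon$) shows the difference shifts by at most $2L\varepsilon$ when passing from $\bar y$ to $\hat y$. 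The closeness $\|\hat y-\bar y\|_\infty=\varepsilon$ pins down the entries of $\bar y^{(j)}$: for $i\in A$ one has $\bar{y}^{(j)}_i\in[1/r-\varepsilon,\,1/r+\varepsilon]$, while for $i'\notin A$ one has $0<\bar{y}^{(j)}_{i'}\le\varepsilon$. The first condition $\hat d\varepsilon<\tfrac12$ is exactly what keeps $1/r-\varepsilon>0$ and all the relevant logarithms well defined.

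I would then split into two cases. Inside the support ($i,i'\in A$) the ratio $\bar y^{(j)}_i/\bar y^{(j)}_{i'}$ lies in $[\tfrac{1-r\varepsilon}{1+r\varepsilon},\tfrac{1+r\varepsilon}{1-r\varepsilon}]$, so the log-ratio is bounded in absolute value by $\ln\frac{1+\hat d\varepsilon}{1-\hat d\varepsilon}$; adding the Lipschitz slack gives $|\Phi^{(j)}_{i'}(\hat y)-\Phi^{(j)}_i(\hat y)|\le T\ln\frac{1+\hat d\varepsilon}{1-\hat d\varepsilon}+2L\varepsilon<\mathsf{g}$ by the third condition of \eqref{eq:equ_cond}. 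Since $\hat y\in\bar{\mathcal X}$, the definition \eqref{mingap} of the minimal gap forces this difference to vanish, i.e.\ all $\Phi^{(j)}_i(\hat y)$ with $i\in A$ coincide. Across the support ($i\in A$, $i'\notin A$) the ratio obeys $\bar y^{(j)}_i/\bar y^{(j)}_{i'}\ge \tfrac1{\hat d\varepsilon}-1>1$, so the log-ratio is positive, and the second condition $\varepsilon/\ln(1/(\hat d\varepsilon)-1)<T/(2L)$ is precisely what guarantees $\Phi^{(j)}_{i'}(\hat y)-\Phi^{(j)}_i(\hat y)\ge T\ln(\tfrac1{\hat d\varepsilon}-1)-2L\varepsilon>0$. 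Together these say $A$ is exactly the minimizing set of $\Phi^{(j)}(\hat y)$ with all minimizers equal, which is \eqref{eq:equ_inf}; invoking Proposition~\ref{prop:loc_opt} then closes the proof.

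The main obstacle is not conceptual but bookkeeping: matching each of the three inequalities in \eqref{eq:equ_cond} to the exact estimate it controls, and in particular verifying that every logarithm argument stays strictly positive, which is where $\hat d\varepsilon<\tfrac12$ and the monotone replacement of $r$ by its upper bound $\hat d$ enter. The one delicate point to state carefully is that the ``within-support'' case is settled by a \emph{gap} argument (a quantity forced below $\mathsf{g}$ must be zero), whereas the ``across-support'' case is a genuine \emph{separation} argument, and the two use opposite directions of the same log-ratio estimate.
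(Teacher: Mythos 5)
Your proposal is correct and follows essentially the same route as the paper's proof: the log-ratio of the equilibrium identity, the entrywise bounds on $\bar y^{(j)}$ from $\|\hat y-\bar y\|_\infty=\varepsilon$, the within-support gap argument forcing equality via $\mathsf{g}$, and the across-support separation via $T\ln(1/(\hat d\varepsilon)-1)>2L\varepsilon$, all match, with the same role assigned to each of the three conditions in \eqref{eq:equ_cond}. No gaps.
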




\begin{proof} Let $A_j$ collect the nonzero entries of $\hat{y}^{(j)}$ and $r=|A_j|$. 
In order to obtain Eq.~\eqref{eq:equ_inf}, according to the definition of softmax function given in Eq.~\eqref{eq:soft}, it suffices to show for every $j \in [m]$, 
\begin{equation}\label{eq:pry}
\Phi_i^{(j)} (\hat{y}) 
\begin{cases}
=\min\limits_{1\leq k\leq d_{j}}\Phi_{k}^{(j)} (\hat{y}), & i\in A_j, \\ 
>\min\limits_{1\leq k\leq d_{j}}\Phi_{k}^{(j)} (\hat{y}), & i\notin A_j.
\end{cases}
\end{equation}
And, the conditions given in Eq.~\eqref{eq:equ_cond}  imply that  $\hat{d}\varepsilon<1$,  $\ln(1/(\hat{d}\varepsilon)-1)>0$, and $T\ln(1/(\hat{d}\varepsilon)-1)>2L\varepsilon$.


First, we prove that  $\Phi_{i_1}^{(j)}(\hat y)=\Phi_{i_2}^{(j)}(\hat y)$, $\forall\ i_1, i_2\in A_j$. Using the fact that $\bar{y}$ is an equilibrium, we arrive at 
\begin{equation}\label{eq:ln}
\ln\frac{\bar{y}_{i_1}^{(j)}}{\bar{y}_{i_2}^{(j)}}=\frac{1}{T}\left(-\Phi_{i_1}^{(j)}(\bar{y})+\Phi_{i_2}^{(j)}(\bar{y})\right).
\end{equation}
On the other hand, the distance between $\hat{y}$ and $\bar{y}$ provides a limit for every entry of $\bar{y}^{(j)}$. By $|\hat{y}^{(j)}_i-\bar{y}^{(j)}_i|\leq \|\hat{y}-\bar{y}\|_\infty=\varepsilon$, we get
\begin{equation}\label{eq:yin}
\bar{y}^{(j)}_i \in 
\begin{cases}
\left[\frac{1}{r}-\varepsilon,\frac{1}{r}+\varepsilon\right], & i\in A_j, \\
[0,\varepsilon], & i\notin A_j.
\end{cases}
\end{equation}
Combining Eqs.~\eqref{eq:equ_cond}, \eqref{eq:ln} and \eqref{eq:yin} together and using $r\leq \hat{d}$ lead to 
an estimate,
\[
\left|\Phi_{i_1}^{(j)}(\bar{y})-\Phi_{i_2}^{(j)}(\bar{y})\right|=T\left|\ln\frac{\bar{y}_{i_1}^{(j)}}{\bar{y}_{i_2}^{(j)}}\right|\leq  T\ln\frac{\frac{1}{r}+\varepsilon}{\frac{1}{r}-\varepsilon} \leq T\ln\frac{1+\hat{d}\varepsilon}{1-\hat{d}\varepsilon},\;\;
\forall\ i_1,i_2\in A_j,
\]
and thus we have 
\begin{align*}
\left|\Phi_{i_1}^{(j)}(\hat y)-\Phi_{i_2}^{(j)}(\hat y)\right|&\leq \left|\Phi_{i_1}^{(j)}(\hat y)-\Phi_{i_1}^{(j)}(\bar{y})\right|+\left|\Phi_{i_1}^{(j)}(\bar{y})-\Phi_{i_2}^{(j)}(\bar{y})\right|+\left|\Phi_{i_2}^{(j)}(\hat y)-\Phi_{i_2}^{(j)}(\bar{y})\right|\\
	&\leq L\varepsilon+T\ln\frac{1+\hat{d}\varepsilon}{1-\hat{d}\varepsilon}+L\varepsilon< \mathsf{g}, 
\end{align*}
which directly implies $\Phi_{i_1}^{(j)}(\hat y)=\Phi_{i_2}^{(j)}(\hat y)$ for all $i_1, i_2\in A_j$,  because
$\mathsf{g}$ defined in Eq.~\eqref{mingap} gives the minimal distance between two different $\Phi^{(j)}_i$s in $\bar{\mathcal{X}}$. 

In order to verify Eq.~\eqref{eq:pry}, the rest is to prove that $\Phi_{i'}^{(j)}(\hat y)>\Phi_{i_1}^{(j)}(\hat y)$ for all $i'\not \in A_j$. According to Eqs.~\eqref{eq:equ_cond}, \eqref{eq:ln} and \eqref{eq:yin} and the fact that $r\leq \hat{d}$, we have 
\[
\Phi_{i'}^{(j)}(\bar{y})-\Phi_{i_1}^{(j)}(\bar{y})=T\ln\frac{\bar{y}_{i_1}^{(j)}}{\bar{y}_{i'}^{(j)}} \geq T\ln\frac{\frac{1}{r}-\varepsilon}{\varepsilon} \geq T\ln\left(\frac{1}{\hat{d}\varepsilon}-1\right)> 2L\varepsilon,
\]
and then 
\begin{align*}
\Phi_{i'}^{(j)}(\hat y)-\Phi_{i_1}^{(j)}(\hat y)&=\left(\Phi_{i'}^{(j)}(\bar{y})-\Phi_{i_1}^{(j)}(\bar{y})\right)+\left(\Phi_{i'}^{(j)}(\hat y)-\Phi_{i'}^{(j)}(\bar{y})\right)-\left(\Phi_{i_1}^{(j)}(\hat y)-\Phi_{i_1}^{(j)}(\bar{y})\right)\\
	&>2L\varepsilon-\left|\Phi_{i'}^{(j)}(\hat y)-\Phi_{i'}^{(j)}(\bar{y})\right|-\left|\Phi_{i_1}^{(j)}(\hat y)-\Phi_{i_1}^{(j)}(\bar{y})\right|\\
	&\geq 2L\varepsilon-L\varepsilon-L\varepsilon = 0.
\end{align*}
The proof is completed.
\end{proof}

\begin{rmk}\rm
\label{rmk:cond}
There exists a more concise condition for $\varepsilon$: 
\begin{equation}
\varepsilon <\min\left\{\frac{1}{4\hat{d}},\frac{T}{2L},\frac{\mathsf{g}}{3\hat{d}T+2L}\right\},
\end{equation}
from which the three conditions given in Eq.~\eqref{eq:equ_cond} can be readily derived by direct relaxations. 
That is, for a given $T$, a sufficiently small $\varepsilon$ guarantees an equilibrium point of the ODE system \eqref{eq:main_ODE} gives a local optimum of MCPP~\eqref{eq:mcpp0} in $\bar{\mathcal{X}}$. However, we cannot theoretically assure that $\varepsilon$ will be small enough as $T$ approaches $0$ at current stage.
\end{rmk}


\section{Implementation details}
\label{sec:detail}
The biggest benefit of transforming MCPP~\eqref{eq:mcpp0} into the ODE system~\eqref{eq:main_ODE} is that 
various numerical ODE solvers and related techniques come into play for complex combinatorial problems. 
From Proposition~\ref{prop:local}, we need to find the equilibrium of Eq.~\eqref{eq:main_ODE} quickly rather than to know how we approach it.
That is, we do not have to know the dynamics evolution quite precisely. To this end, we adopt variable time step forward Euler scheme for accelerating the calculation. Besides, also inspired by SA, we use an infinite temperatures sequence (mostly it is decreasing) $T_1,T_2,\dots,$ and numerically solve Eq.~\eqref{eq:main_ODE} with $T:=T_s,\ s=1,2,\dots,$ until we find an equilibrium $\bar{y}(T_s)$. Except for the first {numerical integration} with $T=T_1$, the initial value for the ODE system~\eqref{eq:main_ODE} with $T=T_s$ can be set to be the previous equilibrium $\bar{y}(T_{s-1})$. When the distance $\varepsilon:=\min_{y\in\bar{\mathcal{X}}} \|y-\bar{y}(T)\|_\infty$ is less than a prescribed accuracy tolerance $\varepsilon_0$, we stop evolving the ODE system and turn to rounding procedure.


It should be noted that there is a suitable range for $T_1$ due to the following reasons.
\begin{enumerate}[(1)]
\item $T_1$ should not be too large or the diversity of solutions may be insufficient. To see this, consider the extreme case by formally letting $T=+\infty$ and replacing $1/T$ by $0$ in Eq.~\eqref{eq:ybar}. We get $\bar{y}(+\infty)^{(j)}=\bm 1_{d_j}/d_j$, where $\bm 1_{d_j}$ is a $d_j$-dimensional vector with every entry identical to $1$. This uninformative equilibrium is independent of the initial point $y_0$. A similar phenomenon happens when $T_1$ is large enough, in which the numerical experiments show that we can only find one equilibrium $\bar{y}(T_1)$ for different $y_0$s (and it is close to $\bar{y}(+\infty)$). Then the subsequent evolutions of $y$ will be similar. Sometimes the method even produces the same solutions. 
\item  $T_1$ should not be too small either. Otherwise the ODE will act like a greedy local search. This phenomenon is accessible noticing that the SA method is more like greedy algorithm when the temperature becomes smaller and so does the ODE approach.
\end{enumerate}
In practice, it is not necessary to determine the range precisely. To find a reasonable assign for $T_1$, we first choose a small value then double it repeatedly until $2T_1$ leads to uninformative equilibriums while $T_1$ does not. Thus the above two requirements are satisfied.

\subsection{Choice of the initial data}
\label{sec:rand_init}
As mentioned in Section~\ref{sec:intro},  the initial data of ODE $y_0$ should satisfy the ``continuous" multiple choice constraint, $\sum_{i=1}^{d_j} (y_0)_i^{(j)} = 1$ for all $j\in[m]$. Namely, $(y_0)^{(j)}$ belongs to the standard $(d_j-1)$-simplex $\Delta^{d_j-1}:=\{z\in\mathbb{R}^{d_j}|z\geq 0, \sum_{i=1}^{d_j} z_i = 1\}$ .
Therefore, we generate $(y_0)^{(j)}$s by sampling  
from a distribution on this simplex and then combine $(y_0)^{(j)},\ j=1,2,\dots,m$ into one $y_0$. 
In practice, we choose the distribution to be the symmetric Dirichlet distribution with the concentration parameter equal to $0.01$, and it is more likely to produce values most entries of which are close to $0$.  This may lead to a larger expected distance between each pair of $y_0$s and thus a more thorough exploration of the solution space of the ODE system.


\subsection{Variable time step}
The forward Euler (FE) scheme for an autonomous system $\frac{\rmd y}{\rmd t}=F(y)$ reads 
\begin{equation}\label{fe}
y^{k}=y^{k-1}+h^{k-1} F^{k-1},
\end{equation}
where $F:\mathbb{R}^n\rightarrow \mathbb{R}^n$ is a vector function, $y^k$ denotes the numerical {approximation} of $y(t)$ at time $t^k$,
$h^{k-1}:=t^{k}-t^{k-1}$ is the time step, and $F^k:=F(y^k)$.
To estimate the local truncation error of Eq.~\eqref{fe}, which is dominated by $(h^{k-1})^2$ when $h^{k-1}$ is small enough, we adopt a method similar to the Richardson extrapolation \cite{richardson_approximate_1911}. 
Let $(h^{k-1})^2c$ denote the error. Assuming that the vector $c$ keeps almost unchanged for some successive time steps and $h^{k-1}=h^{k-2}$ for even $k$, we have another approximation to $y(t^k)$, $\tilde{y}^k:=y^{k-2}+(h^{k-1}+h^{k-2}) F^{k-2}=y^{k-2}+2h^{k-1} F^{k-2}$, the error of which is $4(h^{k-1})^2c$, while Eq.~\eqref{fe} has the error 
$(h^{k-1})^2c+(h^{k-2})^2c=2(h^{k-1})^2c$ staring from the same $y^{k-2}$.
That is, we may use $(\tilde{y}^k-y^k)/2\approx (h^{k-1})^2c$ to estimate the truncation error in practice. 
Specifically, we adopt a {tolerance} $\Theta>0$ for $\theta^k:=\|\tilde{y}^k-y^k\|_2$ and a step adjust ratio $\rho>1$. Since the error for the $k$-th step is proportion to the square of step size $h^{k-1}$ and can be well approximated by $\theta^k/2$, we are able to maintain it within a specific range according to $\Theta$ and $\rho$ by a two-way adjustment of $h^k$ as follows. Reduce the time step $h^{k+1}=h^{k}=h^{k-1}/\rho$ when $\theta^k>\Theta \rho^2$, which may avoid the error increase by decreasing the time step; Increase the time step $h^{k+1}=h^{k}=\rho h^{k-1}$ when $\theta^k<\Theta/ \rho^2$, which may exploit the maximum large time step allowed by a small error. {If the errors make a small change in successive two steps, then $\theta^k$ will not be far from $[\Theta/\rho^2,\rho^2\Theta]$}. Thus, in doing so, the errors can be controlled around a desired accuracy of $\Theta/2$, and few of them will be outside the interval $[\Theta/2\rho^2,\rho^2\Theta/2]$.

\subsection{Rounding procedure}

Proposition~\ref{prop:local} requires $\varepsilon=\|\hat{y}-\bar{y}(T)\|_\infty$ to be small enough. It is natural to think of letting $\hat{y}$ reach the minimum among $\bar{\mathcal{X}}$. 
However, a naive implementation of this procedure is impractical since each $B_{d_j}$ contains $2^{d_j}-1$ elements and thus $|\bar{\mathcal{X}}|=\prod_{j=1}^m (2^{d_j}-1)$.
Instead, we only generate an appropriate $\hat{y}$ as follows. For each $\bar{y}^{(j)}$, let $\eta_j = \max_{i\in[d_j]}\{\bar{y}_i^{(j)}\}$ and $r_j=\left\lfloor 1/\eta_j+1/2\right\rfloor$. Then consider the largest $r_j$ entries of $\bar{y}^{(j)}$ and record the indices in set $A_j\subseteq [d_j]$. This always can be done after noting $\eta_j\geq 1/d_j$. Let $\hat{y}^{(j)}=\chi^{A_j}/r_j$ for each $j\in[m]$ then $\hat{y}^{(j)}\in\bar{B}_{d_j}$ as $|A_j|=r_j\leq d_j$, and we achieve $\hat{y}\in\bar{\mathcal{X}}$. It should be pointed out that if there exists a $r_j'$-element set $A_j'\subseteq [d_j]$ such that $\|\bar{y}^{(j)}-\chi^{A_j'}/r_j'\|_{\infty}$ is small enough, $1/\eta_j$ will be close to $r_j'$, thus we choose the right $r_j=r_j'$. Moreover, the largest $r_j$ entries of $\bar{y}^{(j)}$ will be close to $1/r_j'$, which form $A_j'$ and are also chosen by $A_j$. This explains why $\hat{y}^{(j)}$ may serve as a good approximation for $\bar{y}^{(j)}$.

At the final step, we have to get a Boolean vector $x$ from $\hat{y}$. First, we let $x\leftarrow \hat{y}$. Then sequentially change $x^{(j)}$ into a standard unit vector. This process should be done greedily. More precisely, we choose and $i\in [d_j]$ such that minimize $\Phi_i^{(j)}(x^{(\bar{j})})$ and let $x^{(j)}\leftarrow e_i^{d_j}$. This will not increase $f(x)$. When $x$ is locally optimal, the procedure ends. Thus $f(x)$ is no larger than $f(\hat{y})$.

\subsection{Cost analysis}
\label{subsec:cost}

The main cost of each FE step in Eq.~\eqref{fe} is calculating $F(y)$, and according to Eq.~\eqref{eq:main_ODE}, the cost of $F(y)$ consists of calculating $\Phi(y)$, $m$ softmax functions with $\Phi(y)^{(j)}$ as input and $\mathcal{O}(n)$ basic operations. The softmax functions requires another $\mathcal{O}(n)$ operations. Therefore, the computational complexity of the gradient function may dominate the cost and it is necessary to analyze it respectively for different problems in practice.

\section{Application to MAX-$k$-CUT}
\label{sec:MAXkCUT}

As a classical graph optimization problem, the MAX-$k$-CUT problem wants a $k$-division 
of the vertex set $V$ for a given edge-weighted graph $G=(V, E, W)$ such that the sum of weights across any two subsets is maximized. Let $V=\{v_i\}_{i=1}^{|V|}$,  $W=(w_{ij})_{|V|\times |V|}$ and $V_1$, $V_2,\dots$, $V_k$ denote the subsets after division. Precisely, MAX-$k$-CUT maximizes the following cut function
\begin{equation}\label{prob:MAXkCUT}
\mathrm{cut}(V_1,V_2,\dots,V_k):=\sum_{v_i\in V_r, v_j\in V_s, r\neq s, i<j} w_{ij}.
\end{equation}
It reduces to the famous MAX-CUT problem for $k=2$, one of the $21$ Karp's NP-complete problems \cite{karp_reducibility_1972}. MAX-$k$-CUT is widely studied and there are a number of algorithms for finding 
its approximating solutions,  including the heuristic methods, see e.g. \cite{ma_multiple_2017} and reference therein,  and continuous algorithms \cite{goemans_improved_1995, shao_simple_2019, burer_rank-two_2002}.


We will use the proposed ODE approach to solve the MAX-$k$-CUT problem and need to transform it into MCPP at the first step. This can be readily implemented by setting $n \leftarrow k|V|$, $m \leftarrow |V|$, $d_j \leftarrow k$ for any $j\in[m]$ in Eqs.~\eqref{eq:mcpp0} and \eqref{eq:X}, and the feasible region for MAX-$k$-CUT turns out to be  $B_k^{|V|}$.
Accordingly, the belonging of each vertex $v_i$ is mapped to an element in $B_k$ as follows: $v_i\in V_r$ if and only if $x^{(i)}=e^{k}_r$; $v_i$ and $v_{j}$ belong to the same subset if and only if $(x^{(i)})\transpose x^{(j)}=1$. That is, the corresponding MCPP objective function becomes
\begin{equation}
\label{eq:MkC_MCPP}
f(x) = - \mathrm{cut}(V_1,V_2,\dots,V_k)=- \sum_{i<j} w_{ij}\left(1-(x^{(i)})\transpose x^{(j)}\right).
\end{equation}
The minus sign before the cut function converts it into a minimization problem to fit the form of \eqref{eq:mcpp0}. 
For the sake of comparison, we still regard $-f$ as the result \adda{cut} value in the rest of the paper. 
Then our ODE approach can be used directly to solve the MAX-$k$-CUT problem. 
We implement it with MATLAB R2021a and run it on AMD Ryzen 1950X with 64GB RAM.


\adda{Due to the differences in programming languages, implementation and platforms, making a fair comparison of runtime is quite difficult. Therefore, we focus on the comparison of solution quality where the cut values produced by MOH \cite{ma_multiple_2017} are adopted as the reference.} Tables~\ref{tab:MAXCUT_res}--\ref{tab:MAXkCUT_time} present the numerical results for $k=2,3,4,5$ on G-Set which includes $71$ randomly generated graph with $800$ to $20000$ vertices. \adda{So there are $4\times 71$ instances in all.} 
  \adda{We run 100 independent trials for each instance and record the best cut value among all the results} in Tables~\ref{tab:MAXCUT_res}--\ref{tab:MAX5CUT_res}, see the columns headed by ``ODE". \adda{The initial value of each trial is generated randomly as described in Section~\ref{sec:rand_init}.} 
The \adda{trials} are simply parallelized with MATLAB's \texttt{parfor}.
Other parameters are $T_1=3$, $T_s=\gamma^{s-1}T_1,s=2,3,\dots$, $\gamma = 0.95$, $\varepsilon_0=1\times 10^{-3}$, $\Theta=1\times 10^{-6}n$ and $\rho = 1.1$. 
{$T_1$ is determined according to the routine described in Section~\ref{sec:detail}, and $\{T_s\}$ is a geometric progression since it is a simple sequence converging to zero.}
For comparison, Tables~\ref{tab:MAXCUT_res}--\ref{tab:MAX5CUT_res} also calculate the ratio between the \adda{best} cut value achieved by our ODE approach in those $100$ runs and that by MOH for each instance, and {one can} find that, for all four problems, such ratios are larger than $0.99$ for at least $38$ instances.  For the remaining graphs of G-Set,  the ODE method performs diversely. 

For $k=2$, the ratios are at least $0.96$ and in only one instance is less than $0.97$ (see G64 in Table~\ref{tab:MAXCUT_res}).  For $k=3$, the ratios are all above $0.97$. However, there are $5$ instances for $k=4$ the ratios of which are in $(0.95,0.97]$ (see G18, G19, G20, G56, G61 in Table~\ref{tab:MAX4CUT_res}), and there are $6$ ratios in this interval for $k=5$ (see G19, G20, G21, G40, G56, G61 in Table~\ref{tab:MAX5CUT_res}). 
 The worst solution quality happens in solving the MAX-$5$-CUT problem and the ratio decreases to $0.9472$ for G18 and it is the only one less than $0.95$ in Table~\ref{tab:MAX5CUT_res}.  For $k=2, 4, 5$, the ratios reach $0.98$ in at least $56$ instances, but only $46$ ratios reach $0.98$ for $k=3$. 

In order to further show the overall performance in the solution quality achieved by the ODE approach,  
\adda{for all $(4\times 71) \times 100$ trials, we calculate the ratios between the cut values of the ODE approach and the reference values obtained from MOH, and plot the histogram of all these ratios in Figure~\ref{fig:MAXkCUT_qlt}.}
\adda{It can be observed that,} in more than $97\%$ trials, the ratios are greater than $0.95$,
thereby implying that the approximate cuts produced by the ODE approach are of comparable quality to MOH.
Moreover, we compare our ODE approach's solutions with those obtained by using Gurobi 10.0.1 to directly solve MCPP in the same time duration (see the columns headed by ``Gurobi" in Tables~\ref{tab:MAXCUT_res}--\ref{tab:MAX5CUT_res}). 
We find that, for $k=2$, in more than $70\%$ instances, ODE's solutions are better than or as good as Gurobi's,
and the proportion gets larger and reaches $80\%$ for $k=3,4,5$.

It should be noted that, given an initial data, the ODE approach is deterministic,
and there is no heuristic operations dedicated to MAX-$k$-CUT \addb{since the implementation  and the running parameters strictly follow the general guidelines detailed in Section~\ref{sec:detail}}. 
Hence, we may conclude that, 
the proposed ODE approach can produce relatively good results without any ad hoc designs.

Next, we are going to show the efficiency of our ODE approach. Let $w_{\text{tot}}=\sum_{i<j} w_{ij}$ represent the total weight of all edges and $P=(p_{ij})\in\{0,1\}^{k\times |V|}$
a matrix satisfying $p_{ij}=x^{(j)}_i$. It is easy to check that
\begin{equation}
f(x)=-w_{\text{tot}}+\frac{1}{2}\trace(PWP\transpose)\Rightarrow \frac{\partial f}{\partial P}=PW,
\end{equation}
where we use the denominator layout, thereby implying that the complexity of calculating $\nabla f$ is   $\mathcal{O}(k|E|)$. To obtain this order we have used that $W$ is a sparse matrix with $2|E|$ nonzero elements and calculating each row of $PW$ requires $\mathcal{O}(|E|)$ operations. By the observation in Section~\ref{subsec:cost}, we deduce that the time complexity of each FE step is $\mathcal{O}(k(|E|+|V|))$. 

Table~\ref{tab:MAXkCUT_time} reports the approximate total steps and time needed by 
running the ODE approach once and shows clearly that the computing cost is not expensive \adda{compared to MOH.}
\adda{In MOH, each iteration step has a time complexity of $\mathcal{O}(|E|+k|V|)$ and the total iteration steps may be up to $10^6 \sim 10^8$ \cite{ma_multiple_2017}.}

\begin{table}[p]
\footnotesize
\centering
\caption{\small Numerical cut values for MAX-CUT.}
\label{tab:MAXCUT_res}
\begin{tabular}{l|N{2.5em}|N{2.5em}|N{2.5em}|N{3em}||l|N{2.5em}|N{2.5em}|N{2.5em}|N{3em}}
\toprule
\multicolumn{1}{l|}{Graph} & \multicolumn{1}{l|}{ODE}   & \multicolumn{1}{l|}{MOH} & \multicolumn{1}{l|}{Ratio} & \multicolumn{1}{l||}{Gurobi}
& \multicolumn{1}{l|}{Graph}  & \multicolumn{1}{l|}{ODE}  & \multicolumn{1}{l|}{MOH} & \multicolumn{1}{l|}{Ratio} & \multicolumn{1}{l}{Gurobi} \\
 \midrule

G1  & 11623 & 11624 & 0.9999  & 11624 & G37 & 7629 & 7691 & 0.9919  & 7535 \\
G2  & 11612 & 11620 & 0.9993  & 11620 & G38 & 7624 & 7688 & 0.9917  & 7543 \\
G3  & 11621 & 11622 & 0.9999  & 11622 & G39 & 2355 & 2408 & 0.9780  & 2250 \\
G4  & 11646 & 11646 & 1.0000  & 11646 & G40 & 2342 & 2400 & 0.9758  & 2198 \\
G5  & 11626 & 11631 & 0.9996  & 11631 & G41 & 2333 & 2405 & 0.9701  & 2166 \\
G6  & 2173 & 2178 & 0.9977  & 2178 & G42 & 2429 & 2481 & 0.9790  & 2307 \\
G7  & 1999 & 2006 & 0.9965  & 2006 & G43 & 6645 & 6660 & 0.9977  & 6225 \\
G8  & 2005 & 2005 & 1.0000  & 2005 & G44 & 6640 & 6650 & 0.9985  & 6173 \\
G9  & 2045 & 2054 & 0.9956  & 2038 & G45 & 6638 & 6654 & 0.9976  & 6147 \\
G10 & 1993 & 2000 & 0.9965  & 1992 & G46 & 6640 & 6649 & 0.9986  & 6282 \\
G11 & 554 & 564 & 0.9823  & 564 & G47 & 6644 & 6657 & 0.9980  & 6200 \\
G12 & 548 & 556 & 0.9856  & 556 & G48 & 6000 & 6000 & 1.0000  & 6000 \\
G13 & 576 & 582 & 0.9897  & 582 & G49 & 6000 & 6000 & 1.0000  & 6000 \\
G14 & 3049 & 3064 & 0.9951  & 3015 & G50 & 5880 & 5880 & 1.0000  & 5880 \\
G15 & 3032 & 3050 & 0.9941  & 2993 & G51 & 3823 & 3848 & 0.9935  & 3773 \\
G16 & 3027 & 3052 & 0.9918  & 2989 & G52 & 3825 & 3851 & 0.9932  & 3769 \\
G17 & 3030 & 3047 & 0.9944  & 2994 & G53 & 3829 & 3850 & 0.9945  & 3781 \\
G18 & 979 & 992 & 0.9869  & 918 & G54 & 3820 & 3852 & 0.9917  & 3782 \\
G19 & 891 & 906 & 0.9834  & 853 & G55 & 10211 & 10299 & 0.9915  & 9906 \\
G20 & 931 & 941 & 0.9894  & 869 & G56 & 3954 & 4016 & 0.9846  & 3851 \\
G21 & 914 & 931 & 0.9817  & 880 & G57 & 3414 & 3494 & 0.9771  & 3494 \\
G22 & 13325 & 13359 & 0.9975  & 11180 & G58 & 19096 & 19288 & 0.9900  & 18928 \\
G23 & 13337 & 13344 & 0.9995  & 12368 & G59 & 5911 & 6087 & 0.9711  & 5716 \\
G24 & 13325 & 13337 & 0.9991  & 11008 & G60 & 14089 & 14190 & 0.9929  & 13989 \\
G25 & 13321 & 13340 & 0.9986  & 12572 & G61 & 5678 & 5798 & 0.9793  & 5590 \\
G26 & 13308 & 13328 & 0.9985  & 11132 & G62 & 4766 & 4868 & 0.9790  & 4872 \\
G27 & 3325 & 3341 & 0.9952  & 2515 & G63 & 26799 & 27033 & 0.9913  & 26545 \\
G28 & 3286 & 3298 & 0.9964  & 2174 & G64 & 8460 & 8747 & 0.9672  & 8239 \\
G29 & 3379 & 3405 & 0.9924  & 2497 & G65 & 5444 & 5560 & 0.9791  & 5562 \\
G30 & 3389 & 3413 & 0.9930  & 2442 & G66 & 6208 & 6360 & 0.9761  & 6364 \\
G31 & 3299 & 3310 & 0.9967  & 2546 & G67 & 6802 & 6942 & 0.9798  & 6950 \\
G32 & 1384 & 1410 & 0.9816  & 1410 & G70 & 9482 & 9544 & 0.9935  & 9546 \\
G33 & 1362 & 1382 & 0.9855  & 1382 & G72 & 6840 & 6998 & 0.9774  & 7008 \\
G34 & 1362 & 1384 & 0.9841  & 1384 & G77 & 9712 & 9928 & 0.9782  & 9940 \\
G35 & 7628 & 7686 & 0.9925  & 7544 & G81 & 13724 & 14036 & 0.9778  & 14058 \\
G36 & 7612 & 7680 & 0.9911  & 7538 &     &     &     &     &  \\

\bottomrule
\end{tabular}
\end{table}

\begin{table}[p]
\footnotesize
\centering
\caption{\small Numerical cut values for MAX-$3$-CUT.}
\label{tab:MAX3CUT_res}
\begin{tabular}{l|N{2.5em}|N{2.5em}|N{2.5em}|N{3em}||l|N{2.5em}|N{2.5em}|N{2.5em}|N{3em}}
\toprule
\multicolumn{1}{l|}{Graph} & \multicolumn{1}{l|}{ODE}   & \multicolumn{1}{l|}{MOH} & \multicolumn{1}{l|}{Ratio} & \multicolumn{1}{l||}{Gurobi}
& \multicolumn{1}{l|}{Graph}  & \multicolumn{1}{l|}{ODE}  & \multicolumn{1}{l|}{MOH} & \multicolumn{1}{l|}{Ratio} & \multicolumn{1}{l}{Gurobi} \\
 \midrule

G1  & 15158 & 15165  & 0.9995  & 14863 & G37 & 9968 & 10052  & 0.9916  & 9893 \\
G2  & 15160 & 15172  & 0.9992  & 14912 & G38 & 9959 & 10040  & 0.9919  & 9892 \\
G3  & 15167 & 15173  & 0.9996  & 14899 & G39 & 2837 & 2903  & 0.9773  & 2369 \\
G4  & 15176 & 15184  & 0.9995  & 14889 & G40 & 2808 & 2870  & 0.9784  & 2357 \\
G5  & 15187 & 15193  & 0.9996  & 14827 & G41 & 2803 & 2887  & 0.9709  & 2343 \\
G6  & 2631 & 2632  & 0.9996  & 2323 & G42 & 2897 & 2980  & 0.9721  & 2445 \\
G7  & 2401 & 2409  & 0.9967  & 2078 & G43 & 8571 & 8573  & 0.9998  & 8303 \\
G8  & 2420 & 2428  & 0.9967  & 2104 & G44 & 8543 & 8571  & 0.9967  & 8286 \\
G9  & 2464 & 2478  & 0.9944  & 2138 & G45 & 8543 & 8566  & 0.9973  & 8273 \\
G10 & 2401 & 2407  & 0.9975  & 2047 & G46 & 8545 & 8568  & 0.9973  & 8279 \\
G11 & 653 & 669  & 0.9761  & 669 & G47 & 8548 & 8572  & 0.9972  & 8260 \\
G12 & 645 & 660  & 0.9773  & 661 & G48 & 6000 & 6000  & 1.0000  & 6000 \\
G13 & 670 & 686  & 0.9767  & 684 & G49 & 6000 & 6000  & 1.0000  & 6000 \\
G14 & 3979 & 4012  & 0.9918  & 3939 & G50 & 6000 & 6000  & 1.0000  & 6000 \\
G15 & 3943 & 3984  & 0.9897  & 3930 & G51 & 4992 & 5037  & 0.9911  & 4958 \\
G16 & 3951 & 3991  & 0.9900  & 3924 & G52 & 4996 & 5040  & 0.9913  & 4952 \\
G17 & 3936 & 3983  & 0.9882  & 3924 & G53 & 5004 & 5039  & 0.9931  & 4958 \\
G18 & 1176 & 1207  & 0.9743  & 1027 & G54 & 4993 & 5036  & 0.9915  & 4952 \\
G19 & 1050 & 1081  & 0.9713  & 872 & G55 & 12329 & 12429  & 0.9920  & 12047 \\
G20 & 1097 & 1122  & 0.9777  & 931 & G56 & 4610 & 4752  & 0.9701  & 3850 \\
G21 & 1089 & 1109  & 0.9820  & 929 & G57 & 3985 & 4083  & 0.9760  & 4101 \\
G22 & 17080 & 17167  & 0.9949  & 16577 & G58 & 24977 & 25195  & 0.9913  & 24016 \\
G23 & 17130 & 17168  & 0.9978  & 16587 & G59 & 7097 & 7262  & 0.9773  & 5921 \\
G24 & 17117 & 17162  & 0.9974  & 16604 & G60 & 16946 & 17076  & 0.9924  & 16726 \\
G25 & 17125 & 17163  & 0.9978  & 16631 & G61 & 6668 & 6853  & 0.9730  & 5641 \\
G26 & 17103 & 17154  & 0.9970  & 16600 & G62 & 5566 & 5685  & 0.9791  & 5717 \\
G27 & 3963 & 4020  & 0.9858  & 3338 & G63 & 35001 & 35322  & 0.9909  & 33661 \\
G28 & 3921 & 3973  & 0.9869  & 3325 & G64 & 10217 & 10443  & 0.9784  & 8587 \\
G29 & 4036 & 4106  & 0.9830  & 3453 & G65 & 6341 & 6490  & 0.9770  & 6539 \\
G30 & 4063 & 4119  & 0.9864  & 3489 & G66 & 7241 & 7416  & 0.9764  & 7474 \\
G31 & 3963 & 4003  & 0.9900  & 3270 & G67 & 7904 & 8086  & 0.9775  & 8148 \\
G32 & 1618 & 1653  & 0.9788  & 1653 & G70 & 9999 & 9999  & 1.0000  & 9999 \\
G33 & 1583 & 1625  & 0.9742  & 1627 & G72 & 8010 & 8192  & 0.9778  & 8256 \\
G34 & 1573 & 1607  & 0.9788  & 1609 & G77 & 11332 & 11578  & 0.9788  & 11666 \\
G35 & 9961 & 10046  & 0.9915  & 9876 & G81 & 15985 & 16321  & 0.9794  & 16290 \\
G36 & 9954 & 10039  & 0.9915  & 9911 &     &     &     &     &  \\

\bottomrule
\end{tabular}

\end{table}

\begin{table}[p]
\footnotesize
\centering
\caption{\small Numerical cut values for MAX-$4$-CUT.}
\label{tab:MAX4CUT_res}
\begin{tabular}{l|N{2.5em}|N{2.5em}|N{2.5em}|N{3em}||l|N{2.5em}|N{2.5em}|N{2.5em}|N{3em}}
\toprule
\multicolumn{1}{l|}{Graph} & \multicolumn{1}{l|}{ODE}   & \multicolumn{1}{l|}{MOH} & \multicolumn{1}{l|}{Ratio} & \multicolumn{1}{l||}{Gurobi}
& \multicolumn{1}{l|}{Graph}  & \multicolumn{1}{l|}{ODE}  & \multicolumn{1}{l|}{MOH} & \multicolumn{1}{l|}{Ratio} & \multicolumn{1}{l}{Gurobi} \\
 \midrule

G1  & 16789 & 16803  & 0.9992  & 16443 & G37 & 11018 & 11117  & 0.9911  & 10967 \\
G2  & 16798 & 16809  & 0.9993  & 16395 & G38 & 11015 & 11108  & 0.9916  & 10970 \\
G3  & 16799 & 16806  & 0.9996  & 16375 & G39 & 2935 & 3006  & 0.9764  & 2449 \\
G4  & 16800 & 16814  & 0.9992  & 16436 & G40 & 2898 & 2976  & 0.9738  & 2487 \\
G5  & 16798 & 16816  & 0.9989  & 16413 & G41 & 2906 & 2983  & 0.9742  & 2423 \\
G6  & 2739 & 2751  & 0.9956  & 2439 & G42 & 3014 & 3092  & 0.9748  & 2563 \\
G7  & 2497 & 2515  & 0.9928  & 2114 & G43 & 9353 & 9376  & 0.9975  & 9089 \\
G8  & 2513 & 2525  & 0.9952  & 2159 & G44 & 9358 & 9379  & 0.9978  & 9086 \\
G9  & 2574 & 2585  & 0.9957  & 2120 & G45 & 9360 & 9376  & 0.9983  & 9112 \\
G10 & 2497 & 2510  & 0.9948  & 2114 & G46 & 9355 & 9378  & 0.9975  & 9100 \\
G11 & 661 & 677  & 0.9764  & 677 & G47 & 9372 & 9381  & 0.9990  & 9073 \\
G12 & 652 & 664  & 0.9819  & 665 & G48 & 6000 & 6000  & 1.0000  & 6000 \\
G13 & 680 & 690  & 0.9855  & 690 & G49 & 6000 & 6000  & 1.0000  & 6000 \\
G14 & 4396 & 4440  & 0.9901  & 4363 & G50 & 6000 & 6000  & 1.0000  & 6000 \\
G15 & 4354 & 4406  & 0.9882  & 4346 & G51 & 5518 & 5571  & 0.9905  & 5500 \\
G16 & 4375 & 4415  & 0.9909  & 4353 & G52 & 5536 & 5584  & 0.9914  & 5494 \\
G17 & 4360 & 4411  & 0.9884  & 4325 & G53 & 5528 & 5574  & 0.9917  & 5520 \\
G18 & 1207 & 1261  & 0.9572  & 1031 & G54 & 5527 & 5579  & 0.9907  & 5503 \\
G19 & 1080 & 1121  & 0.9634  & 894 & G55 & 12498 & 12498  & 1.0000  & 12498 \\
G20 & 1119 & 1168  & 0.9580  & 934 & G56 & 4722 & 4931  & 0.9576  & 4000 \\
G21 & 1133 & 1155  & 0.9810  & 923 & G57 & 4036 & 4112  & 0.9815  & 4148 \\
G22 & 18739 & 18776  & 0.9980  & 18170 & G58 & 27650 & 27885  & 0.9916  & 26567 \\
G23 & 18736 & 18777  & 0.9978  & 18155 & G59 & 7362 & 7539  & 0.9765  & 6090 \\
G24 & 18740 & 18769  & 0.9985  & 18211 & G60 & 17148 & 17148  & 1.0000  & 17148 \\
G25 & 18742 & 18775  & 0.9982  & 18135 & G61 & 6842 & 7110  & 0.9623  & 5850 \\
G26 & 18734 & 18767  & 0.9982  & 18101 & G62 & 5642 & 5743  & 0.9824  & 5793 \\
G27 & 4159 & 4201  & 0.9900  & 3433 & G63 & 38758 & 39083  & 0.9917  & 37386 \\
G28 & 4098 & 4150  & 0.9875  & 3439 & G64 & 10605 & 10814  & 0.9807  & 8884 \\
G29 & 4245 & 4293  & 0.9888  & 3474 & G65 & 6430 & 6534  & 0.9841  & 6608 \\
G30 & 4261 & 4305  & 0.9898  & 3548 & G66 & 7352 & 7474  & 0.9837  & 7553 \\
G31 & 4131 & 4171  & 0.9904  & 3376 & G67 & 8026 & 8155  & 0.9842  & 8256 \\
G32 & 1641 & 1669  & 0.9832  & 1679 & G70 & 9999 & 9999  & 1.0000  & 9999 \\
G33 & 1602 & 1638  & 0.9780  & 1644 & G72 & 8123 & 8264  & 0.9829  & 8358 \\
G34 & 1595 & 1616  & 0.9870  & 1623 & G77 & 11493 & 11674  & 0.9845  & 11827 \\
G35 & 11017 & 11111  & 0.9915  & 10957 & G81 & 16208 & 16470  & 0.9841  & 16670 \\
G36 & 11023 & 11108  & 0.9923  & 10955 &     &     &     &     &  \\

\bottomrule
\end{tabular}
\end{table}

\begin{table}[p]
\footnotesize
\centering
\caption{\small Numerical cut values for MAX-$5$-CUT.}
\label{tab:MAX5CUT_res}
\begin{tabular}{l|N{2.5em}|N{2.5em}|N{2.5em}|N{3em}||l|N{2.5em}|N{2.5em}|N{2.5em}|N{3em}}
\toprule
\multicolumn{1}{l|}{Graph} & \multicolumn{1}{l|}{ODE}   & \multicolumn{1}{l|}{MOH} & \multicolumn{1}{l|}{Ratio} & \multicolumn{1}{l||}{Gurobi}
& \multicolumn{1}{l|}{Graph}  & \multicolumn{1}{l|}{ODE}  & \multicolumn{1}{l|}{MOH} & \multicolumn{1}{l|}{Ratio} & \multicolumn{1}{l}{Gurobi} \\
 \midrule

G1  & 17695 & 17703  & 0.9995  & 17326 & G37 & 11562 & 11603  & 0.9965  & 11354 \\
G2  & 17693 & 17706  & 0.9993  & 17364 & G38 & 11552 & 11601  & 0.9958  & 11493 \\
G3  & 17691 & 17701  & 0.9994  & 17390 & G39 & 2944 & 3022  & 0.9742  & 2479 \\
G4  & 17687 & 17709  & 0.9988  & 17341 & G40 & 2884 & 2986  & 0.9658  & 2384 \\
G5  & 17697 & 17710  & 0.9993  & 17326 & G41 & 2925 & 2986  & 0.9796  & 2475 \\
G6  & 2763 & 2781  & 0.9935  & 2317 & G42 & 3037 & 3109  & 0.9768  & 2574 \\
G7  & 2495 & 2533  & 0.9850  & 2183 & G43 & 9747 & 9770  & 0.9976  & 9487 \\
G8  & 2516 & 2535  & 0.9925  & 2121 & G44 & 9742 & 9772  & 0.9969  & 9509 \\
G9  & 2574 & 2601  & 0.9896  & 2226 & G45 & 9747 & 9771  & 0.9975  & 9471 \\
G10 & 2503 & 2526  & 0.9909  & 2147 & G46 & 9742 & 9774  & 0.9967  & 9481 \\
G11 & 667 & 677  & 0.9852  & 677 & G47 & 9745 & 9775  & 0.9969  & 9471 \\
G12 & 654 & 662  & 0.9879  & 665 & G48 & 6000 & 6000  & 1.0000  & 6000 \\
G13 & 680 & 689  & 0.9869  & 690 & G49 & 6000 & 6000  & 1.0000  & 6000 \\
G14 & 4611 & 4639  & 0.9940  & 4568 & G50 & 6000 & 6000  & 1.0000  & 6000 \\
G15 & 4580 & 4606  & 0.9944  & 4548 & G51 & 5802 & 5826  & 0.9959  & 5753 \\
G16 & 4588 & 4613  & 0.9946  & 4546 & G52 & 5802 & 5837  & 0.9940  & 5761 \\
G17 & 4575 & 4603  & 0.9939  & 4514 & G53 & 5807 & 5829  & 0.9962  & 5707 \\
G18 & 1201 & 1268  & 0.9472  & 1013 & G54 & 5806 & 5830  & 0.9959  & 5723 \\
G19 & 1083 & 1132  & 0.9567  & 918 & G55 & 12498 & 12498  & 1.0000  & 12498 \\
G20 & 1132 & 1172  & 0.9659  & 937 & G56 & 4733 & 4971  & 0.9521  & 3866 \\
G21 & 1115 & 1162  & 0.9596  & 963 & G57 & 4049 & 4111  & 0.9849  & 4148 \\
G22 & 19513 & 19553  & 0.9980  & 18989 & G58 & 29019 & 29105  & 0.9970  & 28017 \\
G23 & 19513 & 19558  & 0.9977  & 18883 & G59 & 7443 & 7566  & 0.9837  & 6191 \\
G24 & 19507 & 19555  & 0.9975  & 19063 & G60 & 17148 & 17148  & 1.0000  & 17148 \\
G25 & 19506 & 19554  & 0.9975  & 18989 & G61 & 6866 & 7188  & 0.9552  & 5946 \\
G26 & 19504 & 19552  & 0.9975  & 18949 & G62 & 5650 & 5744  & 0.9836  & 5793 \\
G27 & 4178 & 4236  & 0.9863  & 3449 & G63 & 40692 & 40786  & 0.9977  & 39540 \\
G28 & 4117 & 4182  & 0.9845  & 3429 & G64 & 10732 & 10896  & 0.9849  & 8809 \\
G29 & 4255 & 4327  & 0.9834  & 3548 & G65 & 6428 & 6540  & 0.9829  & 6608 \\
G30 & 4268 & 4340  & 0.9834  & 3563 & G66 & 7364 & 7476  & 0.9850  & 7553 \\
G31 & 4157 & 4211  & 0.9872  & 3461 & G67 & 8035 & 8165  & 0.9841  & 8256 \\
G32 & 1644 & 1670  & 0.9844  & 1679 & G70 & 9999 & 9999  & 1.0000  & 9999 \\
G33 & 1612 & 1638  & 0.9841  & 1644 & G72 & 8145 & 8266  & 0.9854  & 8358 \\
G34 & 1589 & 1615  & 0.9839  & 1623 & G77 & 11516 & 11687  & 0.9854  & 11827 \\
G35 & 11547 & 11605  & 0.9950  & 11477 & G81 & 16227 & 16501  & 0.9834  & 16670 \\
G36 & 11543 & 11601  & 0.9950  & 11416 &     &     &     &     &  \\

\bottomrule
\end{tabular}
\end{table}

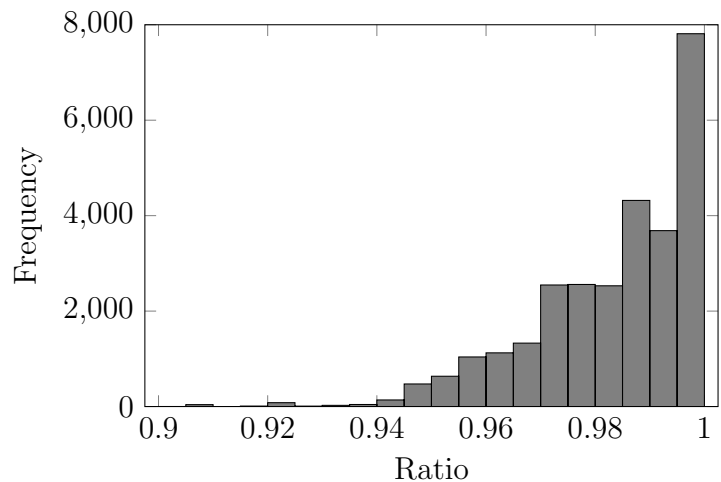
\begin{figure}[htbp]
\centering

%
\begin{tikzpicture}

\begin{axis}[%
width=3in,
height=2in,
scale only axis,
xmin=0.8975,
xmax=1.0025,
ymin=0,
ymax=8000,
xlabel={Ratio},
ylabel={Frequency},
ylabel near ticks,
axis background/.style={fill=white},
legend style={legend cell align=left, align=left, draw=white!15!black}
]
\addplot[ybar interval, fill=gray, fill opacity=0.6, draw=black, area legend] table[row sep=crcr] {%
x	y\\
0.9	1\\
0.905	40\\
0.91	0\\
0.915	10\\
0.92	81\\
0.925	9\\
0.93	25\\
0.935	43\\
0.94	138\\
0.945	473\\
0.95	635\\
0.955	1039\\
0.96	1125\\
0.965	1329\\
0.97	2547\\
0.975	2558\\
0.98	2530\\
0.985	4319\\
0.99	3686\\
0.995	7812\\
1	7812\\
};

\end{axis}
\end{tikzpicture}%
\caption{\small Quality check: Histogram of the ratios \adda{between the cut values of the ODE approach and the reference values produced by MOH} for all $4\times 71 \times 100$ trials of MAX-$k$-CUT.
}
\label{fig:MAXkCUT_qlt}
\end{figure}

\begin{table}
\footnotesize
\centering
\caption{\small Approximate total steps and time in seconds on G-Set used by running the ODE approach once for MAX-$k$-CUT  from a given initial data. }
\label{tab:MAXkCUT_time}
\begin{tabular}{c*{5}{|M}}
\toprule
   $k$ & \multicolumn{1}{l|}{$|V|$} & \multicolumn{1}{l|}{$|E|$} & \multicolumn{1}{l|}{\makecell{Number\\  of steps}} & \multicolumn{1}{l|}{Time} & \multicolumn{1}{l}{\makecell{Time\\ per step}}  \\
    \midrule
    \multirow{4}[0]{*}{2} & 8\times 10^{2} & 2\times 10^{4} & 3.0\times 10^{4} & 1.3 & 4.3\times 10^{-5} \\
& 2\times 10^{3} & 2\times 10^{4} & 1.0\times 10^{5} & 4.0 & 4.0\times 10^{-5} \\
& 1\times 10^{4} & 2\times 10^{4} & 9.0\times 10^{4} & 15 & 1.7\times 10^{-4} \\
& 2\times 10^{4} & 4\times 10^{4} & 1.5\times 10^{5} & 30 & 2.0\times 10^{-4} \\
    \midrule
\multirow{4}[0]{*}{3} & 8\times 10^{2} & 2\times 10^{4} & 9.1\times 10^{4} & 7.8   & 8.6\times 10^{-5} \\
    & 2\times 10^{3} & 2\times 10^{4} & 1.7\times 10^{5} & 28  & 1.6\times 10^{-4} \\
    & 1\times 10^{4} & 2\times 10^{4} & 1.1\times 10^{5} & 55  & 5.0\times 10^{-4} \\
    & 2\times 10^{4} & 4\times 10^{4} & 1.2\times 10^{5} & 84  & 7.0\times 10^{-4} \\
    \midrule
\multirow{4}[0]{*}{4} & 8\times 10^{2} & 2\times 10^{4} & 1.2\times 10^{5} & 20  & 1.5\times 10^{-4} \\
    & 2\times 10^{3} & 2\times 10^{4} & 1.4\times 10^{5} & 28  & 2.0\times 10^{-4} \\
    & 1\times 10^{4} & 2\times 10^{4} & 1.0\times 10^{6} & 60  & 6.0\times 10^{-4} \\
    & 2\times 10^{4} & 4\times 10^{4} & 1.2\times 10^{5} & 90  & 7.5\times 10^{-4} \\
    \midrule
\multirow{4}[0]{*}{5} & 8\times 10^{2} & 2\times 10^{4} & 1.1\times 10^{5} & 13   & 1.2\times 10^{-4} \\
    & 2\times 10^{3} & 2\times 10^{4} & 1.5\times 10^{5} & 30  & 2.0\times 10^{-4} \\
    & 1\times 10^{4} & 2\times 10^{4} & 1.2\times 10^{5} & 70  & 5.5\times 10^{-4} \\
    & 2\times 10^{4} & 4\times 10^{4} & 1.3\times 10^{5} & 86  & 6.6\times 10^{-4} \\
    \bottomrule
\end{tabular}%
\end{table}

\section{Application to approximating the star discrepancy}
\label{sec:discrepancy}

For a positive integer $d$, vectors $a,b\in\mathbb{R}^d$ and relation $\triangle\in \{<, >,\leq,\geq\}$, let $a\bigtriangleup b$ represent that the relation holds for every entry, namely $a_i \bigtriangleup b_i$ for all $i\in[d]$. We define a closed interval $[a,b]:=\left\{u\in \mathbb{R}^d\middle|u\geq a\; \text{and } u\leq b\right\}$ for $a<b$. The half-open and open cases can be similarly expressed. The volume for an interval denotes as $\vol([a,b]):=\prod (b_i-a_i)$. When $a=0$, we drop off $a$ and simplify it as $\vol(b):=\vol([a,b])$. To represent a series of vector with indices, we let those indices appear as superscripts of a single symbol. The subscripts are left for denoting the entries of these vectors. Given an $N$-point set $U=\{u^1,\dots,u^N\}\subset [0,1)^d$,  $u=\left(u_1,\dots,u_d\right)\in[0,1]^d$, let 
\begin{equation}\label{eq:AD}
A(u; U)=\left|[0,u)\cap U\right|, \quad D(u; U) = \vol(u)-\frac{1}{N} A(u; U). 
\end{equation}
The star discrepancy of the $N$-point set $U$ is defined by
\begin{equation}\label{eq:star0}
d_{\infty}^*(U)=\sup_{u\in[0,1]^d} |D(u; U)|,
\end{equation}
which measures the uniformity of $U$,
and has been widely used in high dimensional numerical integration \cite{koksma_general_1942,hlawka_funktionen_1961} as well as in high dimensional statistics such as number-theoretical methods \cite{fang_number-theoretic_1993} and density estimation \cite{li_density_2016,shao_spade_2020}. 
However, calculating $d_{\infty}^*(U)$ admits NP-hardness \cite{gnewuch_finding_2009} and only a few algorithms were developed. They are the exact algorithm \cite{bundschuh_method_1993},
the threshold-accepting algorithm \cite{winker_application_1997} and its variation, dubbed the TA\_improved algorithm \cite{gnewuch_new_2011}. In this section, we will apply the proposed ODE approach for MCPP to approximate $d_{\infty}^*(U)$. To the best of our knowledge, this is the first attempt to use a continuous algorithm for approximating the star discrepancy.


For $j\in[d]$, we define $\Gamma_j(U)=\{u_j^i\, \big|\, i=1,2,\dots,N\}$, $\bar{\Gamma}_j(U)=\Gamma_j(U)\cup\{1\}$, 
$ \underline{\Gamma}_j(U)=\Gamma_j(U)\cup\{0\}$, and $\Gamma(U)=\Gamma_{1}(U) \times \cdots \times \Gamma_{d}(U)$ ($\bar\Gamma(U)$ and $\underline\Gamma(U)$ can be defined in a similar way). 
It can be readily observed that, on each open sub-domain of $[0,1]^d$ divided by the grids in $\Gamma(U)$,
$A(u;U)$ keeps unchanged and thus $|D(u; U)|$ reaches its maximum at one of the extreme points of $\vol(u)$, i.e., either the lower left or upper right corner. Since $\underline\Gamma(U)$ and $\bar\Gamma(U)$ respectively collect all the lower left and upper right corners in $[0,1]^d$, an equivalent form for the star discrepancy in Eq.~\eqref{eq:star0} was obtained in \cite{niederreiter_discrepancy_1972}: 
\begin{equation}
\label{eq:Enm0}
d_{\infty}^*(U)=\max \left\{\max _{u \in \bar{\Gamma}(U)} D(u; U), \max _{u\in \underline\Gamma(U)} \bar{D}(u; U)\right\},
\end{equation}
where  
\begin{equation}\label{eq:ADbar}
\bar{A}(u; U)=\left|[0,u]\cap U\right|, \quad 
\bar{D}(u; U)  = \frac{1}{N} \bar{A}(u; U)-\vol(u).
\end{equation}
We should mention that,  $\underline\Gamma(U)$, the feasible region for maximizing $\bar{D}(u; U)$,  can be slightly narrowed to $\Gamma(U)$ due to the following reasons. For each $j\in [d]$, if $0\in \Gamma_j(U)$, then $\underline{\Gamma}_j(U)$ equals $\Gamma_j(U)$. Otherwise, $\bar{A}(u; U)=0$ if $u_j=0$,  thus $\bar{D}(u;U)=0<d_{\infty}^*(U)$, indicating that we can drop out that kind of $u$ when maximizing $\bar{D}(u;U)$. In either case, we can modify the Cartesian product expression of $\Gamma(U)$ by using $\Gamma_j(U)$ instead of $\underline{\Gamma}_j(U)$. Therefore, from Eq.~\eqref{eq:Enm0}, we achieve a more concise form for calculating the star discrepancy,
\begin{equation}
\label{eq:Enm}
d_{\infty}^*(U)=\max \left\{\max _{u \in \bar{\Gamma}(U)} D(u; U), \max _{u\in \Gamma(U)} \bar{D}(u; U)\right\}.
\end{equation}
The equivalent form~\eqref{eq:Enm} transforms the star discrepancy into two optimization problems on discrete sets with which we are able to obtain MCPP forms.

{Let us} start from the first optimization problem $\max _{u \in \bar{\Gamma}(U)} D(u; U)$ in Eq.~\eqref{eq:Enm}. Inspired by the Cartesian-product structure of $\bar{\Gamma}(U)$, the feasible region of corresponding MCPP instance should be $B_{N+1}^d$ since the cardinality of $\bar{\Gamma}_j$ is no larger than $N+1$.  That is, the key parameters which shape the MCPP problem in Eqs.~\eqref{eq:mcpp0} and \eqref{eq:X} are:  $n \leftarrow (N+1)d$, $m \leftarrow d$, $d_j \leftarrow N+1$ for any $j\in[m]$. However, mapping $\bar{\Gamma}(U)$ into $B_{N+1}^d$ and defining an objective function over $B_{N+1}^d$ that represents $D(u;U)$ need meticulous designs we are about to state below. 

For $j \in [d]$, sort $\bar\Gamma_j(U)$ into $\bar{u}_{1j}\leq \bar{u}_{2j}\leq\cdots\leq \bar{u}_{Nj}<\bar{u}_{(N+1)j}=1$, which also sort $\Gamma_j(U)$ in the same order for $\Gamma_j(U) = \{\bar{u}_{1j},\dots,\bar{u}_{Nj}\}$. 
For $i\in [N+1]$, let $\bar{u}_{\sigma_{ij}j} := u_j^{i}$ record the order of $u_j^i$ and 
$\{\sigma_{ij} \, \big| \, i\in [N+1]\}$ be the corresponding permutation of $[N+1]$.  
When some elements of $\Gamma_j(U)$ are identical, such permutation may not be unique,
and it is viable to choose one of them arbitrarily. Let $x\in B_{N+1}^d$. For each $j\in [d]$, there is a unique entry of $x^{(j)}$, denoted by $x^{(j)}_{s_j}$ with $s_j\in[N+1]$,  that equals $1$. Then we have a surjection from $x\in B_{N+1}^d$ to $u\in \bar{\Gamma}(U)$
\begin{equation}
\label{map}
x^{(j)} \rightarrow s_j \rightarrow   u_j=\bar{u}_{s_j j}, \quad \forall\, j\in[d]. 
\end{equation}
Let
\begin{equation}
\label{eq:3o}
\nu(x) := \prod_{j=1}^d\sum_{i=1}^{N+1} x^{(j)}_{i} \bar{u}_{ij},\;\;\;
\alpha(x) :=\sum_{i=1}^N \prod_{j=1}^d \sum_{k=\sigma_{ij}+1}^{N+1} x^{(j)}_{k},\;\;\;
\delta(x) := \nu(x)-\frac{1}{N}\alpha(x), 
\end{equation}
all of which are affine with respect to each $x^{(j)}$. In fact, 
$\nu(x)$, $\alpha(x)$ and $\delta(x)$ act as $\vol(u)$, $A(u;U)$ and $D(u;U)$, respectively.
{For $x\in B_{N+1}^d$, it should be noticed that the inner sum can be replaced by the logical disjunction ``$\vee$" and the product can be replaced by the logical conjunction ``$\wedge$" (regarding $0$-$1$ variables as boolean variables) , namely, 
\[
\alpha(x)=\sum_{i=1}^N \bigwedge_{j=1}^d \bigvee_{k=\sigma_{ij}+1}^{N+1} x^{(j)}_{k},
\]
since $\sum_{k=\sigma_{ij}+1}^{N+1} x^{(j)}_{k}\in \{0,1\}$. 
}



\begin{prop}
\label{prop:poly_disc}
Let $u\in \bar\Gamma(U)$ be the image of $x\in B_{N+1}^d$ under the surjection~\eqref{map}. Then 
we have
\begin{align} 
\vol(u) &= \nu(x), \label{eq:vnu}\\
A(u;U) &\leq \alpha(x), \label{eq:Aalpha}\\
D(u;U) &\geq \delta(x). \label{eq:Ddelta}
\end{align}
Moreover, $D(u;U)$ and $\delta(x)$ admit a deeper connection, 
\begin{equation}
\max_{u\in\bar{\Gamma}(U)}D(u;U)=\max_{x\in B_{N+1}^d} \delta(x). \label{eq:max_D_delta}
\end{equation}
\end{prop}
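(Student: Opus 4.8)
The plan is to prove the three inequalities \eqref{eq:vnu}--\eqref{eq:Ddelta} directly from the rank bookkeeping built into \eqref{eq:3o}, and then obtain the identity \eqref{eq:max_D_delta} by pairing a surjectivity argument with a careful choice of preimage that removes the slack in \eqref{eq:Aalpha}.

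First I would record the elementary identity behind \eqref{eq:vnu}. Writing $s_j\in[N+1]$ for the unique index with $x^{(j)}_{s_j}=1$, the inner sum $\sum_i x^{(j)}_i\bar u_{ij}$ collapses to $\bar u_{s_jj}=u_j$, so $\nu(x)=\prod_j u_j=\vol(u)$. For \eqref{eq:Aalpha} I would compare ranks: the inner factor $\sum_{k=\sigma_{ij}+1}^{N+1}x^{(j)}_k$ equals $1$ exactly when $s_j>\sigma_{ij}$ and $0$ otherwise, so the product over $j$ is the indicator of the event $\{\,s_j>\sigma_{ij}\ \forall j\,\}$, and $\alpha(x)$ counts the data indices $i$ for which it holds. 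Since $\bar u_{1j}\le\cdots\le\bar u_{(N+1)j}$, the strict inequality $u^i_j<u_j$ forces $\sigma_{ij}<s_j$, whence $\{i:u^i<u\}\subseteq\{i:s_j>\sigma_{ij}\ \forall j\}$ and therefore $A(u;U)\le\alpha(x)$. The bound \eqref{eq:Ddelta} then follows at once by subtracting $\tfrac1N$ times \eqref{eq:Aalpha} from \eqref{eq:vnu}.

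For the identity \eqref{eq:max_D_delta} I would argue the two inequalities separately. The easy direction is $\max_x\delta(x)\le\max_u D(u;U)$: every $x\in B_{N+1}^d$ satisfies $\delta(x)\le D(u;U)$ with $u$ its image under \eqref{map} by \eqref{eq:Ddelta}, and since that map is onto $\bar\Gamma(U)$, the right-hand maximum dominates. The substantive direction is the reverse, and here is where the freedom in breaking ties when defining $\sigma$ must be exploited. Let $u^\ast$ attain $\max_{u\in\bar\Gamma(U)}D(u;U)$. I would build a particular preimage $x^\ast$ by choosing, coordinatewise, $s_j$ to be the \emph{smallest} rank with $\bar u_{s_jj}=u^\ast_j$. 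With this choice the ranks strictly below $s_j$ all carry values strictly smaller than $u^\ast_j$, so $s_j>\sigma_{ij}$ becomes equivalent to $u^i_j<u^\ast_j$; the inclusion used for \eqref{eq:Aalpha} sharpens to an equality, giving $\alpha(x^\ast)=A(u^\ast;U)$ and hence $\delta(x^\ast)=D(u^\ast;U)$. Thus $\max_x\delta\ge\delta(x^\ast)=\max_u D$, and the two inequalities combine to the claimed identity.

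The main obstacle I anticipate is the tie bookkeeping in the reverse direction: I must verify that picking the minimal rank at each coordinate turns the one-sided inclusion $\{u^i<u^\ast\}\subseteq\{s_j>\sigma_{ij}\ \forall j\}$ into an equality. Concretely, this amounts to checking that any boundary data point, i.e. one with $u^i_j=u^\ast_j$ in some coordinate $j$, satisfies $\sigma_{ij}\ge s_j$ under the minimal-rank choice and is therefore excluded from $\alpha(x^\ast)$ in that coordinate, exactly as it is excluded from $A(u^\ast;U)$ by the strict inequality in \eqref{eq:AD}. Once this tie analysis is pinned down the rest is routine; the only remaining care is to note that the minimal-rank preimage indeed lies in $B_{N+1}^d$ and maps to $u^\ast$ under \eqref{map}.
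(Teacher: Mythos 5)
Your proposal is correct and follows essentially the same route as the paper's proof: the same rank bookkeeping for \eqref{eq:vnu}--\eqref{eq:Ddelta}, the same surjectivity argument for the easy inequality, and the same minimal-rank choice of preimage to turn the one-sided implication $u^i_j<u_j\Rightarrow\sigma_{ij}<s_j$ into an equivalence and thereby achieve $\alpha(\hat{x}(u^*))=A(u^*;U)$. Your tie analysis (boundary points with $u^i_j=u^*_j$ get $\sigma_{ij}\geq s_j$ under the minimal-rank choice) is exactly the step the paper verifies via $\bar{u}_{(s_j-1)j}<\bar{u}_{s_jj}$.
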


\begin{proof}
It is easy to verify Eq.~\eqref{eq:vnu} from the fact $\sum_{i=1}^{N+1} x^{(j)}_{i} \bar{u}_{ij}=u_j$.
Next {let us} express $|[0,u)\cap \{u^i\}|$ by $x$. For each $i\in[N]$, we have

\begin{align}
                  u^i\in [0,u)
\Leftrightarrow\ & u_j^i=\bar{u}_{\sigma_{ij}j}<u_j=\bar{u}_{s_j j},\;\;\; \forall\, j\in[d],\notag\\
\Rightarrow\ & \sigma_{ij}<s_j,\;\;\; \forall\, j\in[d], \label{eq:suff}\\
\Leftrightarrow\ & \bigvee_{k=\sigma_{ij}+1}^{N+1} x^{(j)}_{k}=1,\;\;\; \forall\, j\in[d].\notag
\end{align}
Thus $|[0,u)\cap \{u^i\}|\leq \bigwedge_{j=1}^d \bigvee_{k=\sigma_{ij}+1}^{N+1} x^{(j)}_{k}$ and summing up for all $i\in [N]$ leads to Eq.~\eqref{eq:Aalpha}.  Combining Eqs.~\eqref{eq:AD}, \eqref{eq:3o}, \eqref{eq:vnu} and \eqref{eq:Aalpha} together, we immediately arrive at Eq.~\eqref{eq:Ddelta},
and thus $\max_{u\in \bar{\Gamma}(U)} D(u;U)\geq \max_{x\in B_{N+1}^d}\delta(x)$ for 
the mapping~\eqref{map} from $B_{N+1}^d$ to $\bar{\Gamma}(U)$ is surjective. 

To prove Eq.~\eqref{eq:max_D_delta}, we only have to show $\max_{u\in \bar{\Gamma}(U)} D(u;U)\leq \max_{x\in B_{N+1}^d}\delta(x)$, which can be obtained via the fact that,  for each $u\in \bar{\Gamma}(U)$, there exists a preimage of $u$, $\hat{x}=\hat{x}(u)\in B_{N+1}^d$, such that $A(u;U)=\alpha(\hat{x}(u))$. The $\hat{x}$ is defined by letting $s_j$ equal the minimal index that $u_j=\bar{u}_{s_j j}$ holds and $\hat{x}_{s_j}^{(j)}=1,\ j\in [d]$. Thus $\bar{u}_{(s_j-1) j}<\bar{u}_{s_j j}$ whenever $s_j>1$, otherwise contradicting to the minimality of $s_j$. When $\sigma_{ij}<s_j$, we then have
\begin{equation}
\sigma_{ij}\leq s_j-1\Rightarrow \bar{u}_{\sigma_{ij}j}\leq \bar{u}_{(s_j-1) j} < \bar{u}_{s_j j}.
\end{equation}
Therefore, the ``$\Rightarrow$" in Eq.~\eqref{eq:suff} becomes ``$\Leftrightarrow$", which results in the equivalence between $|[0,u)\cap \{u^i\}|$ and $\bigwedge_{j=1}^d \bigvee_{k=\sigma_{ij}+1}^{N+1} \hat{x}^{(j)}_{k}$, rather than {an} inequality. Hence, we get $A(u;U)=\alpha(\hat{x}(u))$. Let $u^*$ reaches the maximum of $D(u;U)$. Then 
\[
\max_{u\in \bar{\Gamma}(U)} D(u;U) = D(u^*;U)=\delta(\hat{x}(u^*))\leq \max_{x\in B_{N+1}^d}\delta(x).
\]
\end{proof}
%
%
%

Eq.~\eqref{eq:max_D_delta} gives an MCPP form for the first optimization problem in Eq.~\eqref{eq:Enm}
and the same treatment can be also applied to the second one. The key parameters which shape the MCPP problem in Eqs.~\eqref{eq:mcpp0} and \eqref{eq:X} become:  $n \leftarrow Nd$, $m \leftarrow d$, $d_j \leftarrow N$ for any $j\in[m]$, and we need the following functions defined on $B_N^d$: 
\begin{equation*}
\bar{\nu}(x') := \prod_{j=1}^d\sum_{i=1}^{N} (x')^{(j)}_{i} \bar{u}_{ij},\;\;\;
\bar{\alpha}(x') := \sum_{i=1}^N \prod_{j=1}^d \sum_{k=\sigma_{ij}}^{N} (x')^{(j)}_{k},\;\;\;
\bar{\delta}(x') := \frac{1}{N}\bar{\alpha}(x')-\bar{\nu}(x'),
\end{equation*}
to present the MCPP form
\begin{equation}\label{eq:max_barDdelta}
\max_{u\in\Gamma(U)}\bar{D}(u;U)=\max_{x'\in B_N^d} \bar{\delta}(x'),
\end{equation}
the proof of which is very similar to that of Eq.~\eqref{eq:max_D_delta} and thus skipped here. In a word, according to Eqs.~\eqref{eq:max_D_delta} and \eqref{eq:max_barDdelta}, the star discrepancy is now determined by two MCPP problems
\begin{equation}\label{eq:MCPP_disc}
d_{\infty}^*(U)=\max \left\{\max_{x\in B_{N+1}^d}\delta(x), \max_{x'\in B_N^d}\bar{\delta}(x')\right\},
\end{equation}
which can be solved approximately by the proposed ODE approach in a straightforward manner. 

Compared to the quadratic objective function of MAX-$k$-CUT problem in Eq.~\eqref{eq:MkC_MCPP}, both $\delta(x)$ and $\bar{\delta}(x')$ in Eq.~\eqref{eq:MCPP_disc} are of degree of $d$ and it may be more complicated to store and calculate these function values as well as their gradients when $d>2$. 
For example, calculating the gradients of $\delta$ and $\bar{\delta}$ may involve high computational cost if handling it inappropriately. It forms the main cost in using FE scheme~\eqref{fe} to solve Eq.~\eqref{eq:MCPP_disc}, thereby requiring careful optimization.
More precisely, it is easy to check that for $i\in [N+1],j\in[d]$,
\begin{equation}\label{eq:gg}
\frac{\partial \nu(x)}{\partial x^{(j)}_i} =\bar{u}_{ij}\prod_{j'\in[d]\backslash \{j\}}\sum_{i'=1}^{N+1}x^{(j')}_{i'}\bar{u}_{i'j'}, \;\;\;
\frac{\partial \alpha(x)}{\partial x^{(j)}_i} =\sum_{\substack{i'\in [N]\\ \sigma_{i'j}<i}} \prod_{j'\in[d]\backslash \{j\}}\sum_{k=\sigma_{i'j'}+1}^{N+1}x^{(j')}_k.
\end{equation}
If we calculate the sums and products in $\frac{\partial \alpha(x)}{\partial x^{(j)}_i}$ straightforwardly, there will be at least $\mathcal{O}(N^2d)$ operations. Considering that there are $(N+1)d$ entries, the complexity of $\nabla \delta(x)$ will be at least $\mathcal{O}(N^3d^2)$, which is barely acceptable. However, we would like to point out that the complexity can be limited to $\mathcal{O}(Nd)$ according to the following procedure.  First, it should be observed that given $U$, Eq.~\eqref{eq:gg} can be simplified as
\begin{equation}
\frac{\partial \nu(x)}{\partial x^{(j)}_i}=\bar{u}_{ij}\prod_{j'\in[d]\backslash \{j\}} b_{j'}(x),\;\;\; \frac{\partial \alpha(x)}{\partial x^{(j)}_i}=\sum_{\substack{i'\in [N]\\ \sigma_{i'j}<i}} \prod_{j'\in[d]\backslash \{j\}}z_{i'j'}(x),
\end{equation}
where the vector function $b:[0,1]^n\rightarrow \mathbb{R}^{d}$ and the matrix function $z:[0,1]^n\rightarrow \mathbb{R}^{N\times d}$ are independent with $i$ or $j$. Thus $b$ and $z$ can be calculated in advance. To calculate $z$ efficiently, noting that $z_{i'j'}(x)=\sum_{k>\sigma_{i'j'}}x^{(j')}_k$ is a partial sum of entries of $x^{(j')}$ in reverse order, we can compute each column of $z$ in a group within a cost of only $\mathcal{O}(N)$, thereby a cost of $\mathcal{O}(Nd)$ for computing $z$. To get $g_{i'j}(x):=\prod_{j'\in[d]\backslash \{j\}}z_{i'j'}(x)$ fast, we replace the multiplication by $\prod_{j'\in[d]}z_{i'j'}(x)/z_{i'j}(x)$, whose numerator can be reused for fixed $i'$. Therefore, the complexity of computing $g$ from $z$ is also $\mathcal{O}(Nd)$. 
To get $\nabla \alpha$ from $g$, the partial sum trick can also be applied in the summation $\sum_{i':\sigma_{i'j}<i}g_{i'j}(x)$,  albeit $\sigma_{\cdot j}$ determining the order of entries in $g_{\cdot j}$. Hence, the total time cost is $\mathcal{O}(Nd)$.  In the same spirit, we are able to get $\nabla \nu$ with cost of $\mathcal{O}(Nd)$.   
By these means, the complexity of $\nabla\delta$ is limited to $\mathcal{O}(Nd)$. This is also true for $\nabla\bar{\delta}$. Since $n=\mathcal{O}(Nd)$ for these two MCPP instances, the cost of a FE step is also $\mathcal{O}(Nd)$ by the observation in Section~\ref{subsec:cost}.


The GLP sets used in \cite{winker_application_1997} are adopted for test in this work,
and include $30$ small sets with $N$ ranging from $28$ to $487$ and $d$ from $4$ to $6$
as well as $6$ large sets with $N$ from around $2000$ to $5000$ and $d$ from $6$ to $11$. 
The parameters for our ODE approach are $T_1=1\times 10^{-4}$, $T_s=\gamma^{s-1}T_1,s=2,3,\dots$, $\gamma = 0.95$, $\varepsilon_0=1\times 10^{-3}$, $\Theta = 1\times 10^{-6}Nd$ and $\rho = 1.1$. {The parameters are similar to those in MAX-$k$-CUT (see Section~\ref{sec:MAXkCUT}.)}
Numerical values of the star discrepancy obtained by re-running the ODE approach 100 times are listed in Table~\ref{tab:disc_res}, where those obtained by TA\_improved are directly copied from \cite{gnewuch_new_2011}. 

We list the ratios \adda{between the best discrepancy} values achieved by our ODE method and \adda{by} TA\_improved in Table~\ref{tab:disc_res}. It can be seen that except for $4$ out of $30$ small instances, the ODE method gives the same value as TA\_improved did, see $(N,d)=(312,4), (376,4), (487,4), (73,6)$ in Table~\ref{tab:disc_res}, and for these four instances, the ratios are at least $0.98$. For the last $6$ large instances, the ratios are no smaller than $0.91$. 

The histogram of the ratios \adda{between the discrepancy values of the ODE approach and the reference values obtained from TA\_improved} for $36\times 100$ trials is plotted in Figure~\ref{fig:disc_qlt}. In more than $85\%$ trials, the ratios are greater than $0.9$. Compared to TA\_improved's 100000 iterations per trial used in \cite{gnewuch_new_2011} and the same $\mathcal{O}(Nd)$ complexity per iteration, our method requires much less computational resource overall while producing solutions with similar quality. We also record the approximate typical runtime and steps for different sizes of sets in Table~\ref{tab:disc_time}. It can be seen there that the time consumed per iteration is roughly proportional to $Nd$. 


Since Gurobi 10.0.1 can only deal with quadratic objective functions and constraints, we need to reformulate Eq.~\eqref{eq:MCPP_disc}. For the $\nu(x)$ part of $\delta(x)$, the procedure is straightforward. We create $d$ variables $\mu_j,j=1,2,\dots,d$ and constrain them by $\mu_j=\sum_{i=1}^{N+1} x^{(j)}_{i} \bar{u}_{ij}$. Then the product of $\mu_j$ is modeled in a standard way as follows. We create $\tau_j,\,j=2,3,\dots,d$, satisfying $\tau_j=\mu_1\mu_2\cdots\mu_j$, which can be done by quadratic constraints $\tau_2=\mu_1\mu_2$, $\tau_j=\tau_{j-1}\mu_j,\,j=3,4,\dots,d$. For the $\alpha(x)$ part, we need $\mathcal{O}(Nd)$ variables representing the subsum terms. The products in $\alpha(x)$ are modelled by Gurobi's standard general constraints ``AND". $\bar{\delta}(x)$ is handled in a similar way.

The values obtained by Gurobi limited to the same runtime of ODE approach are also listed in Table~\ref{tab:disc_res}. Since in some instances, Gurobi fails to produce any feasible solutions, we let it run until it finds a solution no worse than our approach's, then we record the runtime (as there are two maximum problems in one instance, we choose the one with larger objective value found by ODE approach).  
The columns headed by ``Gurobi" in Table~\ref{tab:disc_res} show the results by two means. That is, the numbers without parentheses represent the objective function values found by Gurobi under the limitation of runtime, while the ones in parentheses represent the quotients of runtimes of Gurobi and our method (total time for two problems). In the first $30$ instances, Gurobi manages to find feasible solutions, but the objective values are all less than or equal to our method's. For $(N,d)=(3997,9),(4661,10),(4661,11)$, Gurobi fails to provide a competitive solution compared to ODE method after running for more than 20 hours (therefore, we leave a dash there). 

\begin{table}[htbp]
\footnotesize
\centering
\caption{\small Numerical values of the star discrepancy on GLP sets. Values in the columns headed by ``TA" are directly copied from \cite{gnewuch_new_2011} as reference and obtained there by the TA\_improved algorithm. {Values in the columns headed by ``Gurobi" are either the objective values or the quotients of runtimes (in parentheses). 
A dash ``\text{-}" means Gurobi fails to provide a competitive solution compared to the ODE method after running for more than 20 hours. 
}} 
\label{tab:disc_res}
\begin{tabular}{*{2}{M|}*{3}{N{2.6em}|}M||*{2}{M|}*{3}{N{2.6em}|}M}
\toprule
\multicolumn{1}{l|}{$N$} & \multicolumn{1}{l|}{$d$} & \multicolumn{1}{l|}{ODE} & \multicolumn{1}{l|}{TA} & \multicolumn{1}{l|}{Ratio}  & \multicolumn{1}{l|}{Gurobi} &  \multicolumn{1}{l|}{$N$} & \multicolumn{1}{l|}{$d$} & \multicolumn{1}{l|}{ODE} & \multicolumn{1}{l|}{TA} & \multicolumn{1}{l|}{Ratio} & \multicolumn{1}{l}{Gurobi}\\
\midrule
145  & 4   & 0.0731  & 0.0731  & 1.0000  & 0.0731 & 28  & 6   & 0.5360  & 0.5360  & 1.0000  & 0.5360 \\
255  & 4   & 0.1093  & 0.1093  & 1.0000  & 0.1016 & 29  & 6   & 0.2532  & 0.2532  & 1.0000  & 0.2532 \\
312  & 4   & 0.0617  & 0.0618  & 0.9974  & 0.0595 & 35  & 6   & 0.3431  & 0.3431  & 1.0000  & 0.3431 \\
376  & 4   & 0.0752  & 0.0753  & 0.9979  & 0.0682 & 50  & 6   & 0.3148  & 0.3148  & 1.0000  & 0.3148 \\
388  & 4   & 0.1297  & 0.1297  & 1.0000  & 0.0989 & 61  & 6   & 0.1937  & 0.1937  & 1.0000  & 0.1937 \\
442  & 4   & 0.0620  & 0.0620  & 1.0000  & 0.0424 & 73  & 6   & 0.1467  & 0.1485  & 0.9876  & 0.1406 \\
448  & 4   & 0.0548  & 0.0548  & 1.0000  & 0.0178 & 81  & 6   & 0.2500  & 0.2500  & 1.0000  & 0.2498 \\
451  & 4   & 0.0271  & 0.0271  & 1.0000  & 0.0146 & 88  & 6   & 0.2658  & 0.2658  & 1.0000  & 0.2658 \\
471  & 4   & 0.0286  & 0.0286  & 1.0000  & 0.0225 & 90  & 6   & 0.1992  & 0.1992  & 1.0000  & 0.1605 \\
487  & 4   & 0.0413  & 0.0413  & 0.9995  & 0.0138 & 92  & 6   & 0.1635  & 0.1635  & 1.0000  & 0.1631 \\
102  & 5   & 0.1216  & 0.1216  & 1.0000  & 0.1160 & 2129  & 6   & 0.0246  & 0.0254  & 0.9685  & (1.1) \\
122  & 5   & 0.0860  & 0.0860  & 1.0000  & 0.0791 & 3997  & 7   & 0.0251  & 0.0254  & 0.9882  & (8.3) \\
147  & 5   & 0.1456  & 0.1456  & 1.0000  & 0.1107 & 3997  & 8   & 0.0242  & 0.0254  & 0.9528  & (29) \\
153  & 5   & 0.1075  & 0.1075  & 1.0000  & 0.0871 & 3997  & 9   & 0.0387  & 0.0387  & 1.0000  & \text{-} \\
169  & 5   & 0.0755  & 0.0755  & 1.0000  & 0.0710 & 4661  & 10  & 0.0256  & 0.0272  & 0.9412  & \text{-} \\
170  & 5   & 0.0860  & 0.0860  & 1.0000  & 0.0771 & 4661  & 11  & 0.0259  & 0.0283  & 0.9152  & \text{-} \\
195  & 5   & 0.1574  & 0.1574  & 1.0000  & 0.1193 &     &     &     &     &     &  \\
203  & 5   & 0.1675  & 0.1675  & 1.0000  & 0.1260 &     &     &     &     &     &  \\
235  & 5   & 0.0786  & 0.0786  & 1.0000  & 0.0606 &     &     &     &     &     &  \\
236  & 5   & 0.0582  & 0.0582  & 1.0000  & 0.0466 &     &     &     &     &     &  \\

\bottomrule
\end{tabular}%
\end{table}

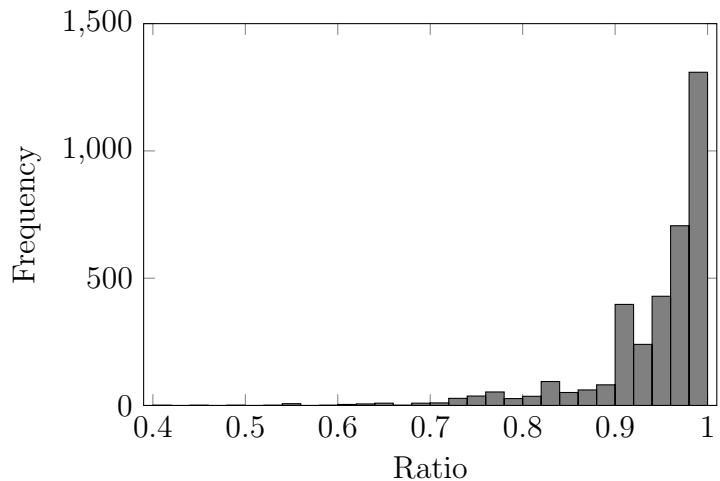
\begin{figure}[htpb]
\centering
\begin{tikzpicture}
\begin{axis}[%
width=3in,
height=2in,
scale only axis,
xmin=0.39,
xmax=1.01,
ymin=0,
ymax=1500,
xlabel={Ratio},
ylabel={Frequency},
ylabel near ticks,
axis background/.style={fill=white},
]
\addplot[ybar interval, fill=gray, fill opacity=0.6, draw=black, area legend] table[row sep=crcr] {%
x	y\\
0.4	1\\
0.42	0\\
0.44	1\\
0.46	0\\
0.48	1\\
0.5	0\\
0.52	1\\
0.54	7\\
0.56	0\\
0.58	1\\
0.6	4\\
0.62	6\\
0.64	9\\
0.66	1\\
0.68	9\\
0.7	10\\
0.72	28\\
0.74	37\\
0.76	53\\
0.78	27\\
0.8	36\\
0.82	94\\
0.84	51\\
0.86	61\\
0.88	81\\
0.9	397\\
0.92	240\\
0.94	429\\
0.96	706\\
0.98	1309\\
1	1309\\
};

\end{axis}
\end{tikzpicture}
\caption{\small Quality check: Histogram of the ratios \adda{between the discrepancy values of the ODE approach and the reference values produced by TA\_improved} for all $36 \times 100$ trials.}
\label{fig:disc_qlt}
\end{figure}

\begin{table}[htbp]
\centering
\footnotesize
\caption{\small Approximate total steps and time in milliseconds used by the ODE approach for approximating the star discrepancy in Eq.~\eqref{eq:MCPP_disc}.}
\label{tab:disc_time}
\begin{tabular}{*{4}{M|}M}
\toprule
\multicolumn{1}{l|}{$N$} & \multicolumn{1}{l|}{$d$} & \multicolumn{1}{l|}{Time} & \multicolumn{1}{l|}{\makecell{Number\\ of steps}} & \multicolumn{1}{l}{\makecell{Time\\ per step}} \\
\midrule
1.0\times 10^2 & 4 & 4.0\times 10^1 & 8.0\times 10^2 & 5.0\times 10^{-2}  \\
2.5\times 10^2 & 4 & 6.0\times 10^1 & 8.0\times 10^2 & 6.0\times 10^{-2}   \\
5.0\times 10^2 & 4 & 1.0\times 10^2 & 1.5\times 10^3 & 7.0\times 10^{-2}   \\
5.0\times 10^2 & 6 & 6.0\times 10^2 & 5.0\times 10^3 & 1.2\times 10^{-1}   \\
2.0\times 10^3 & 6 & 3.2\times 10^3 & 8.0\times 10^3 & 4.0\times 10^{-1}   \\
4.0\times 10^3 & 10& 1.3\times 10^4 & 1.3\times 10^4 & 1.0\times 10^{0}   \\
\bottomrule
\end{tabular}%
\end{table}

\section{Conclusion and discussion}
\label{sec:conclu}


We proposed an ODE approach for multiple choice polynomial programming (MCPP) and demonstrated its validity via both theoretical analysis and numerical experiments. It fully exploits a connection between the discrete MCPP problem and the continuous ODE system through {revealing the relation between local optima of the MCPP and equilibriums of the ODE. The resulting solutions of MCPP instances representing two specific problems are relatively good compared to dedicated algorithms', and are mostly competitive compared to Gurobi's.
We are conducting analysis on the existence of equilibrium points and the realizability of conditions that ensures the local optimality, and trying more advanced numerical techniques for evolving an ODE to its equilibrium points. We are going to extend the proposed ODE approach to some kinds of mixed integer programming problems with more constraints rather than multiple choice. On the other hand, although the polytope of unconstrained pseudo-boolean optimization has been thoroughly studied \cite{del_pia_polyhedral_2017}, there is very limited research on the polytope of MCPP, and thus accelerating the ODE approach with the aid of polyhedral property and/or cutting-plane method is also a subject of future research. Moreover, we hope that our preliminary attempt in this work may inspire more new connections between discrete data world and continuous math field.

%
%
%
%

%

\section*{Acknowledgements}

This research was supported by the National Key R\&D Program of China (Nos. 2020AAA0105200, 2022YFA1005102) and the National Natural Science Foundation of China (Nos.~12325112, 12288101, 11822102).
SS is partially supported by Beijing Academy of Artificial Intelligence (BAAI).

\normalem

\end{document}